\setlist[enumerate,1]{label={(\alph*)}}
\setlist[enumerate,2]{label={(\roman*)}}
\newtheorem{thm}{Theorem}[section]
\newtheorem{conj}[thm]{Conjecture}
\newtheorem{pr}[thm]{Proposition}
\newtheorem{lem}[thm]{Lemma}
\newtheorem{lemma}[thm]{Lemma}
\theoremstyle{definition}
\newtheorem{definition}[thm]{Definition}
\newtheorem*{notation*}{Notation}
\theoremstyle{remark}
\newcommand{\ignore}[1]{}
\newcommand{\R}{\mathbb R}
\newcommand{\N}{\mathbb N}
\newcommand{\Prob}{\Pr}
\newcommand{\E}{{\mathbb{E}}}
\newcommand{\oone}{{o \left(1\right)}}
\newcommand{\omegaone}{{\omega \left(1\right)}}
\newcommand{\AP}[3]{{\tilde{\mathcal{M}} \left( #1, #2 , #3 \right)}}
\newcommand{\mnp}[4]{{ \mathcal{M} \left( #1, #2 , #3 ; #4 \right)}}
\newcommand{\mnmkp}{{\mnp{m}{n}{k}{p}}}
\newcommand{\geqone}[4]{{\mathcal{M} \left(#1, #2 , #3 ; #4; \geq 1 \right)}}
\newcommand{\geqnp}{{\geqone{n}{n}{m}{p}}}
\newcommand{\termdefine}[1]{\textit{#1}}
\newcommand{\mB}{{\mathcal{B}}}
\newcommand{\mC}{{\mathcal{C}}}
\newcommand{\mG}{{\mathcal{G}}}
\newcommand{\given}{{|}}
\DeclareMathOperator{\deg1}{deg}
\begin{document}
\title{On the threshold problem for Latin boxes}
\author{Zur Luria$^1$}
\thanks{$^1${\sc Israel Institute for Advanced Studies.} Email address: zluria@gmail.com}
\author{Michael Simkin$^2$}
\thanks{$^2${\sc Institute of Mathematics and Federmann Center for the Study of Rationality, The Hebrew University of Jerusalem, Israel.} Email address: menahem.simkin@mail.huji.ac.il}

\thanks{Part of this research was conducted at the Institute of Theoretical Studies, ETH, 8092 Zurich, Switzerland. Partially supported by Dr.~Max R\"ossler, the Walter Haefner Foundation and the ETH Foundation}

\begin{abstract}

Let $m \leq n \leq k$. An $m \times n \times k$ 0-1 array is a \termdefine{Latin box} if it contains exactly  $mn$ ones, and has at most one $1$ in each line. As a special case, Latin boxes in which $m = n = k$ are equivalent to Latin squares.

Let $\mathcal{M}(m,n,k;p)$ be the distribution on $m \times n \times k$ 0-1 arrays where each entry is $1$ with probability $p$, independently of the other entries. The threshold question for Latin squares asks when $\mathcal{M}(n,n,n;p)$ contains a Latin square with high probability. More generally, when does $\mathcal{M}(m,n,k;p)$ support a Latin box with high probability?

Let $\varepsilon>0$. We give an asymptotically tight answer to this question in the special cases where $n=k$ and $m \leq \left(1-\varepsilon \right) n$, and where $n=m$ and $k \geq \left(1+\varepsilon \right) n$. In both cases, the threshold probability is $\Theta \left( \log \left( n \right) / n \right)$. This implies threshold results for Latin rectangles and proper edge-colorings of $K_{n,n}$.

\end{abstract}

\maketitle

\pagestyle{plain}

\section{Introduction}

An order-$n$ \termdefine{Latin square} is equivalent to an $n \times n \times n$ 0-1 array with a single 1 in each line, where a line is the set of elements obtained by fixing the values of two indices and letting the third vary over $\left[n\right] \coloneqq \left\{1,...,n\right\}$. With this in mind, the following definition is natural.

\begin{definition}
Let $m \leq n \leq k$. An $m \times n \times k$ 0-1 array is a \termdefine{Latin box} if it contains exactly $mn$ ones, and at most one 1 in each line. 
\end{definition}

An $m \times n \times k$ Latin box is equivalent to a 3-uniform tripartite hypergraph on $m+n+k$ vertices such that each pair of vertices is contained in at most one edge, and the number of edges is maximal subject to this constraint. Thus, Latin boxes can be viewed as a $3$-uniform version of matchings of size $m$ in unbalanced bipartite graphs on $m+n$ vertices.

As additional motivation, consider the two following special cases. An $n \times n \times k$ Latin box $A$ is equivalent to a proper edge-coloring of the complete bipartite graph $K_{n,n}$ using $k$ colors. One obtains such a coloring from $A$ by coloring the edge $\{i,j\}$ with the unique index $c$ such that $A(i,j,c)=1$. The Latin box property ensures that this is a proper coloring. In addition, an $m \times n \times n$ Latin box $A$ is equivalent to an $m \times n$ Latin rectangle $R$ over the symbol set $[n]$, by setting $R(i,j)$ to be the index of the unique 1 in $A(i,j,\cdot)$. 

In this paper, we ask when a random three-dimensional 0-1 array contains a Latin box with high probability. Formally, let $\mnmkp$ be the distribution over $m \times n \times k$ 0-1 arrays where each element is 1 with probability $p$. A property of such an array is \termdefine{monotone} if changing zeros to ones cannot violate the property.

\begin{definition}
Let $m=m(n),k=k(n)$ satisfy $m \leq n \leq k$ for all $n \in \mathbb{N}$. $p_0=p(n)$ is a \termdefine{threshold} for a monotone property $\mathcal{P}$ if
\[
\lim_{n\rightarrow \infty}\Pr [M \sim \mnmkp \text{ satisfies $\mathcal{P}$}] = \begin{cases}
0 & \text{ if $p/p_0 \rightarrow 0$} \\
1 & \text{ if $p/p_0 \rightarrow \infty$}
\end{cases}.
\]
$p_0$ is a \termdefine{sharp threshold} for $\mathcal{P}$ if for every $\eta > 0$, 
\[
\lim_{n\rightarrow \infty}\Pr [M \sim \mnmkp \text{ satisfies $\mathcal{P}$}] = \begin{cases}
0 & \text{ if $p< (1-\eta) p_0$} \\
1 & \text{ if $p>(1+\eta) p_0$}
\end{cases}.
\]
\end{definition}

Our first result addresses the motivating case of $n=m=k$, namely, Latin squares. Here, and throughout the paper, we abuse notation and refer to $n \times n \times n$ Latin boxes as Latin squares.

\begin{thm} \label{thm:Latin}
There is an infinite family $F \subseteq \mathbb{N}$ and $p<1$ such that
\[
\lim_{n\in F,n \rightarrow \infty} \Pr [M \sim \mnp{n}{n}{n}{p} \text{ contains a Latin square}]=1.
\]
\end{thm}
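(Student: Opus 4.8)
The plan is to combine the classical ``blow-up'' (direct product) construction for Latin squares with a self-improving recursion, run along a geometric progression $F = \{2^t : t \in \N\}$. Throughout, write $f_p(n)$ for the probability that $\mnp{n}{n}{n}{p}$ contains a Latin square; this is a polynomial in $p$ and is nondecreasing in $p$ by the obvious monotone coupling.

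\emph{Step 1 (the recursive inequality).} I would first prove that $f_p(ab) \geq f_{\,f_p(b)}(a)$ for all integers $a, b \geq 2$. Index the rows, columns, and symbols of an $ab \times ab \times ab$ array by $[a] \times [b]$. Given $M \sim \mnp{ab}{ab}{ab}{p}$, for $(i_1, j_1, c_1) \in [a]^3$ let $M^{(i_1,j_1,c_1)}$ be the $b \times b \times b$ sub-array with $M^{(i_1,j_1,c_1)}(i_2, j_2, d) = M((i_1,i_2),(j_1,j_2),(c_1,d))$. These $a^3$ sub-arrays partition the entries of $M$, hence are i.i.d.\ copies of $\mnp{b}{b}{b}{p}$, so the $0$-$1$ array $X$ recording whether $M^{(i_1,j_1,c_1)}$ contains an order-$b$ Latin square is distributed as $\mnp{a}{a}{a}{q}$ with $q = f_p(b)$. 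Now, if $X$ contains an order-$a$ Latin square $L_1$, choose for each $(i_1,j_1)$ an order-$b$ Latin square $B^{(i_1,j_1)}$ supported on the ones of $M^{(i_1,j_1,L_1(i_1,j_1))}$; then $((i_1,i_2),(j_1,j_2)) \mapsto (L_1(i_1,j_1),\, B^{(i_1,j_1)}(i_2,j_2))$ is an order-$ab$ Latin square supported on the ones of $M$, since the $L_1$-coordinate separates distinct blocks within every line and each $B^{(i_1,j_1)}$ resolves its own block. This step is routine bookkeeping.

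\emph{Step 2 (density amplification, the main obstacle).} The recursion only helps if $x \mapsto f_x(b)$ is a genuine gain near $1$, i.e.\ $f_x(b) > x$ for $x$ close to $1$. To get this I would use the $b!$ Latin squares $L_\pi(i,j) = \pi\bigl((i+j)\bmod b\bigr)$, $\pi \in S_b$: such an $L_\pi$ is supported on the ones of $M \sim \mnp{b}{b}{b}{x}$ exactly when there is a bijection $\pi$ with $\pi(d) \in T_d$ for every $d$, where $T_d = \{c : M(i, d-i, c) = 1 \text{ for all } i\}$, and the indicators $\mathbf{1}[c \in T_d]$ are i.i.d.\ $\mathrm{Bernoulli}(x^b)$. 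Hence $f_x(b)$ is at least the probability that the random bipartite graph with parts of size $b$ and edge probability $x^b$ has a perfect matching. Hall's theorem together with a union bound --- every obstruction to a perfect matching forces at least $b$ prescribed non-edges --- bounds the failure probability by $4^b(1-x^b)^b \leq (4b)^b(1-x)^b$, so $f_x(b) \geq 1 - (4b)^b(1-x)^b$, which exceeds $x$ as soon as $1 - x < (4b)^{-b/(b-1)}$. In particular, taking $b = 2$ there is a constant $c < 1$ with $f_x(2) > x$ for all $x \in (c, 1)$. This is the heart of the argument: one must check that the boost outruns the extra demands the recursion places on the array, which is why a Latin-square family carrying enough independent randomness (reducing to a dense random bipartite matching) is needed.

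\emph{Step 3 (iteration).} Fix $b = 2$, pick $p \in (c,1)$, and define $p_0 = p$ and $p_{t+1} = f_{p_t}(2)$. By Step 2 the sequence $(p_t)$ is increasing and bounded above by $1$, so it converges to some $L \in (c,1]$; continuity of $f_\cdot(2)$ gives $f_L(2) = L$, which forces $L = 1$. Unwinding Step 1 along $F = \{2^t : t \in \N\}$,
\[
f_p\bigl(2^{t+1}\bigr) \geq f_{p_1}\bigl(2^t\bigr) \geq f_{p_2}\bigl(2^{t-1}\bigr) \geq \cdots \geq f_{p_t}(2) = p_{t+1} \longrightarrow 1 ,
\]
so $\lim_{n \in F,\, n \to \infty} \Pr[\,M \sim \mnp{n}{n}{n}{p} \text{ contains a Latin square}\,] = 1$, which is the assertion.
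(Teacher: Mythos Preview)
Your proof is correct and follows the same block-recursion and fixed-point strategy as the paper along $F=\{2^t\}$: the inequality $f_p(ab)\ge f_{f_p(b)}(a)$, the amplification $f_x(2)>x$ near $1$, and the iteration to the unique fixed point $1$ are exactly the paper's argument. The only difference is in Step~2, where the paper simply computes $f_x(2)=2x^4-x^8$ by inclusion--exclusion on the two disjoint order-$2$ Latin squares (giving the sharper threshold $p^*\approx 0.9206$), whereas your perfect-matching bound is more general but yields a weaker constant and is unnecessary for the theorem as stated.
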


This theorem is proved in Section \ref{sec:Latin}. It is actually an easy consequence of a stronger result of Andr{\'e}n, Casselgren, and \"Ohman \cite{andren2013avoiding}, who showed that an analogous minimum-degree result holds. We include it here because the proof is short and elegant. We also note that Keevash's method of randomized algebraic constructions \cite{Ke14,Ke15} can likely be used to show the existence of some $\varepsilon > 0$ for which $\mnp{n}{n}{n}{n^{-\varepsilon}}$ contains a Latin square with high probability. Showing this, however, is beyond the scope of this paper.

A recurring theme in the study of threshold properties is that an obvious obstruction for a property is essentially the \textit{only} obstruction for that property. For example, in the $G(n;p)$ model, a random graph contains a perfect matching w.h.p.\ whenever it contains no isolated vertices. In the case of Latin squares, the obvious obstruction is a line with no $1$s, corresponding to a threshold of $p=\log(n)/n$. This leads us to the following conjecture. 

\begin{conj}
The threshold for $M \sim \mnp{n}{n}{n}{p}$ to contain a Latin square is $p=\log(n)/n$.
\end{conj}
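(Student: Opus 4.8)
The plan is to prove the two directions of the threshold separately; write $p_0=\log(n)/n$.

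\emph{The easy direction.} When $p/p_0\to 0$, almost surely $M\sim\mnp{n}{n}{n}{p}$ contains a line with no ones, which is already an obstruction to a Latin square. A fixed line is empty with probability $(1-p)^n=\exp(-(1+o(1))pn)=n^{-o(1)}$, so the expected number of empty lines is $3n^2\cdot n^{-o(1)}\to\infty$; a routine second-moment (or Poisson-approximation) computation then shows that w.h.p.\ an empty line exists. So this half is standard and contributes nothing to the difficulty of the conjecture.

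\emph{The hard direction.} Suppose $p/p_0\to\infty$; one must show $M$ contains a Latin square w.h.p. Identify the $n^3$ cells of the array with a ground set, so that a Latin square is a distinguished $n^2$-element subset and we are asking whether the random $p$-subset $M$ contains one of these. By the fractional Kahn--Kalai theorem of Frankston, Kahn, Narayanan and Park (or its strengthening via the full Kahn--Kalai conjecture by Park and Pham), it suffices to exhibit for each $n$ a probability distribution $\mu_n$ on Latin squares that is $q$-spread with $q=O(1/n)$, meaning that for every partial array $S$ one has $\mu_n(\{L:S\subseteq L\})\le q^{|S|}$. Such a measure forces containment w.h.p.\ already at $p=\Theta(q\log(1/q))=\Theta(\log(n)/n)$, and since only a coarse threshold is claimed, the loss of constant factors is irrelevant. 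That $q=\Omega(1/n)$ is the right order can be seen from the fact that a prescribed first row is shared by a $1/n!$ fraction of all Latin squares, and generic partial Latin squares of size $t$ behave similarly; the content is in the matching upper bound $q=O(1/n)$.

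\emph{Building $\mu_n$ and the main obstacle.} To construct $\mu_n$ I would run a random greedy (``nibble'') process that builds a Latin square cell by cell — equivalently, builds a proper edge-colouring of $K_{n,n}$ one step at a time — by repeatedly choosing an unfilled position and assigning it a uniformly random symbol consistent with the partial Latin square so far. The standard pseudorandomness analysis of such processes (tracking the ``number of admissible symbols'' statistics by a self-correcting differential-equation / martingale argument) should show that throughout almost all of the process every position still admits $\Omega(n)$ symbols, so each individual assignment is made with probability $O(1/n)$; multiplying these bounds along any history that produces a given partial array $S$ then yields $\mu_n(\{L:S\subseteq L\})\le (O(1/n))^{|S|}$. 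The hard part — and the reason the statement is only conjectured — is the \emph{endgame}: once $o(n^2)$ positions remain the process can get stuck, and, worse, the per-step $O(1/n)$ bound on the number of choices can fail, destroying $q$-spreadness. The expected remedy is absorption: reserve in advance a structured, robust partial Latin square that can complete essentially any near-complete configuration the nibble is likely to leave behind, and show that routing the final cells through this absorber costs only a bounded factor in the spread. Designing an absorber that is simultaneously robust enough to finish the process and flexible enough not to spoil $q$-spreadness — together with the truncation and union-bound arguments needed because $q$-spreadness is a worst-case condition over all $S$, not merely typical histories — is where I expect essentially all of the work to lie.
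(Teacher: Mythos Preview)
The statement is a \emph{conjecture} in the paper; the paper offers no proof and explicitly leaves it open. So there is no ``paper's own proof'' to compare against --- your proposal is an attempt to go beyond what the paper establishes.

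Your easy direction is correct and matches the paper's informal discussion of the empty-line obstruction.

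For the hard direction, the spread-measure / Park--Pham strategy you sketch is indeed the natural modern line of attack on this problem, and you have correctly located the difficulty: producing an $O(1/n)$-spread distribution on order-$n$ Latin squares is essentially equivalent to the conjecture itself, and the absorption step in the endgame is exactly where the work lies. But what you have written is a plan, not a proof --- you acknowledge as much. Two concrete gaps are worth flagging. First, the sentence ``multiplying these bounds along any history that produces a given partial array $S$ then yields $\mu_n(\{L:S\subseteq L\})\le (O(1/n))^{|S|}$'' is not a valid argument as stated: the event $\{S\subseteq L\}$ is a union over \emph{all} histories of the nibble, with the cells of $S$ interleaved arbitrarily among the other cells, not just histories that place $S$ first. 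Converting per-step bounds into a spread bound requires either a careful planting/coupling argument or, equivalently, sharp upper bounds on the number of completions of an arbitrary partial Latin square $S$ --- and those completion bounds are precisely what the absorption machinery must deliver. Second, even granting a workable absorber, you still need the spread inequality to hold for \emph{every} partial Latin square $S$, including adversarial ones far from anything the nibble would typically produce; ensuring the absorber handles all such $S$ uniformly, with the right quantitative dependence on $|S|$, is a substantial design problem that your outline does not address.

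In short: the approach is sound in spirit and is the expected route to a proof, but the proposal as written does not close the gap that makes this a conjecture rather than a theorem.
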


A similar conjecture was proposed by Casselgren and H{\"a}ggkvist \cite[Conjecture 1.4]{casselgren2016coloring}, although the underlying probability models are different.

The next theorem deals with the case $m < n = k$. It can be interpreted as a result on Latin rectangles. Following a common abuse of notation, here and in the rest of the paper we round large reals to the nearest integer. By an argument of van-Lint and Wilson \cite[Theorem 17.3]{van2001course}, the number of Latin squares is $((1 + o(1))n/e^2)^{n^2}$. Essentially the same argument implies that for fixed $\varepsilon \in (0,1)$, the number of $(1-\varepsilon)n \times n$ Latin rectangles is asymptotically $\left( \left( 1 + \oone \right) \left( \frac{1}{\varepsilon} \right)^{\varepsilon/(1-\varepsilon)} \frac{n}{e^2} \right)^{(1-\varepsilon)n^2}$. For the sake of completeness, we prove this assertion in Appendix \ref{ap:rectangle count}.

\begin{thm}\label{thm:latin_rectangle}
	Let $\varepsilon>0$. The threshold for $M \sim \mnp{(1-\varepsilon)n}{n}{n}{p}$ to contain a Latin box is $\log(n) / n$. Furthermore, if $p = \omega \left( \log (n) / n \right)$, then with high probability $M$ supports
$
\left( \left( 1 \pm \oone \right) \left( \frac{1}{\varepsilon} \right)^{\varepsilon/(1-\varepsilon)} \frac{n}{e^2} p \right)^{(1-\varepsilon)n^2}
$
Latin boxes.
\end{thm}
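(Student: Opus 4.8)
The plan is to separate the two directions. For the $0$-statement, when $p = o(\log n / n)$ a standard first-moment / coupon-collector argument shows that with high probability some line of the array contains no $1$, and a line with no $1$ is an obstruction to containing any Latin box: fixing two coordinates, the third must carry a $1$ somewhere, so every line of a $(1-\varepsilon)n \times n \times n$ array that meets the support of a Latin box must contain a $1$. (One must be slightly careful: not every line needs to be "used" — but the lines indexed by the first, shorter coordinate do, and there are $\Theta(n^2)$ of them, so the expected number of all-zero such lines is $n^2(1-p)^n = n^{1-o(1)} \to \infty$, and a second-moment computation finishes it.)

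The substantive direction is the $1$-statement together with the counting estimate, and here I would use the absorption/switching framework together with the known enumeration of Latin rectangles quoted from Appendix~\ref{ap:rectangle count}. The high-level strategy: let $N = \left( \left( 1 + \oone \right) \left( \frac{1}{\varepsilon} \right)^{\varepsilon/(1-\varepsilon)} \frac{n}{e^2} \right)^{(1-\varepsilon)n^2}$ be the total number of $(1-\varepsilon)n \times n$ Latin rectangles. Each fixed Latin rectangle $R$ has exactly $(1-\varepsilon)n^2$ ones, so the probability that $M \sim \mnp{(1-\varepsilon)n}{n}{n}{p}$ contains $R$ is $p^{(1-\varepsilon)n^2}$. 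Hence the expected number of Latin boxes supported by $M$ is exactly $N p^{(1-\varepsilon)n^2}$, which matches the target count. The two tasks are therefore: (i) show this expectation is concentrated — i.e.\ the second moment is $(1+o(1))$ times the square of the first — or, failing a direct second-moment bound, obtain concentration via a different route; and (ii) separately guarantee that the count is at least $1$ (hence the property holds) w.h.p.\ down to $p = \omega(\log n / n)$, which does not follow from the first-moment calculation alone.

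For task (ii) I would invoke an absorber-type argument: condition on a "skeleton" — a random sparse subarray — and show that w.h.p.\ it can be completed to a Latin box using a Rödl-nibble / random-greedy packing of the tripartite $3$-graph whose edges are the $1$-entries of $M$, together with an absorbing structure built inside $M$ that repairs the few uncovered pairs. The minimum-degree / pseudorandomness input needed for the nibble is supplied by $p = \omega(\log n / n)$: every line has $(1\pm o(1)) \cdot \text{(expected number of 1s)}$ ones w.h.p., and more refined codegree conditions hold as well. The reduction of the *counting* statement to the *existence* statement is the familiar "entropy / spread" or "switching" step: having shown $M$ contains one Latin box w.h.p., one shows the number is concentrated around the mean by bounding, for a typical Latin box, the number of single switches (intercalate-type swaps) available within $M$, and comparing with the analogous count in the "ambient" model of all Latin rectangles, whose switching statistics are controlled by the van der Waerden / permanent estimates underlying the enumeration in Appendix~\ref{ap:rectangle count}.

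The main obstacle I expect is task (i)/(ii) at the *true threshold*, i.e.\ pushing the argument down to $p = \omega(\log n / n)$ rather than merely $p = n^{-\varepsilon}$: at this sparsity the random $3$-graph of $1$-entries has average line-degree only $\omega(\log n)$, which is far too sparse for a naive nibble to leave an absorbable remainder, so one cannot run the nibble on all of $M$. The fix — and the technical heart of the paper — should be to use a sprinkling decomposition $M = M_1 \cup M_2$ with $M_1$ of density $\Theta(\log n / n)$ (just above the coupon-collector threshold, so every line is hit) and $M_2$ a tiny independent sprinkle reserved for absorption, then to argue that $M_1$ already supports a Latin box by a direct minimum-degree completion theorem (this is exactly where Theorem~\ref{thm:Latin}, or rather the Andrén–Casselgren–Öhman-type input behind it, and its rectangle analogue, is used), and finally to recover concentration of the count from a careful switching analysis restricted to the structure produced. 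Making the switching bounds match the enumeration asymptotics to within a $(1+o(1))^{(1-\varepsilon)n^2}$ factor — i.e.\ controlling the count not up to $e^{o(n^2)}$ slack but genuinely tightly — is the delicate quantitative point, and I would expect it to require the permanent-type estimates (Brègman, Schrijver–Van der Waerden) applied iteratively row by row, mirroring the proof in Appendix~\ref{ap:rectangle count}.
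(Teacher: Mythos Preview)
Your $0$-statement and the upper bound on the count via Markov are fine and match the paper. The gap is in the existence and lower-count direction: you propose to treat the $1$-entries as a tripartite $3$-graph and attack it with nibble plus absorption plus sprinkling, and you correctly flag that at density $\omega(\log n/n)$ this is delicate --- indeed, that is essentially the open Latin-square threshold problem, and nothing in your sketch explains how the $\varepsilon$ of slack helps. Your appeal to Theorem~\ref{thm:Latin} or Andr\'en--Casselgren--\"Ohman is misplaced: those concern the cubic case with $p$ a fixed constant below $1$ and give no leverage at $p=\omega(\log n/n)$ or for counting. The ``switching analysis'' you outline for concentration is also too vague to deliver the tight $(1\pm o(1))^{(1-\varepsilon)n^2}$ factor; the paper in fact never proves concentration, only the two one-sided bounds.

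The idea you are missing is that the asymmetry $(1-\varepsilon)n<n$ allows a reduction to \emph{bipartite} matching, avoiding $3$-uniform packing entirely. A Latin box is a sequence of $(1-\varepsilon)n$ disjoint permutation matrices, one per slice $M_i=M(i,\cdot,\cdot)$; after $i-1$ slices have been filled, the available positions in slice $i$ form a $k$-regular bipartite graph $G_i$ with $k=n-i+1\ge\varepsilon n$, and one needs a perfect matching in its $p$-random subgraph $H_i$. Since $k$ stays linear in $n$, existence follows directly from known results on matchings in random subgraphs of dense regular bipartite graphs (Goel--Kapralov--Khanna). For the count, one chooses each matching uniformly at random: a coupling argument shows the $G_i$ remain pseudorandom, a generalized Hall criterion then yields an $L$-factor in $H_i$ with $L=(1-o(1))kp$, and Egorychev--Falikman bounds the number of matchings below by $L^n n!/n^n$. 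The lower bound on the number of Latin boxes is then the standard randomized-construction inequality: the success probability of the algorithm is at most $A\cdot Q$, where $A$ is the number of boxes supported by $M$ and $Q^{-1}=\prod_i L(i)^n n!/n^n$ is a lower bound on the number of outputs.
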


We prove this theorem in Section \ref{sec:rectangle}.
A recent work by Casselgren and H{\"a}ggkvist \cite{casselgren2016coloring} proved a similar result for ${1-o(n^{-1/2})<\varepsilon<1}$. Our theorem can be viewed as a strengthening of their result to any constant $\varepsilon>0$. 

The next theorem can be interpreted as a result on edge-coloring $K_{n,n}$ with $\left( 1 + \varepsilon \right) n$ colors. It is proved in Section \ref{sec:boxes}.

\begin{thm} \label{thm:list_coloring}
	Let $\varepsilon>0$. The threshold for $M \sim \mnp{n}{n}{(1+\varepsilon)n}{p}$ to contain a Latin box is $p = \frac{2 \log n}{(1+\varepsilon)n}$, and this threshold is sharp.
\end{thm}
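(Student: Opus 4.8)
The plan is to handle the two directions of the sharp threshold separately.

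\emph{The $0$-statement.} Suppose $p \le (1-\eta)\frac{2\log n}{(1+\varepsilon)n}$. The obstruction to a Latin box is an \emph{empty cell}: a line $\{(i,j,c):c\in[k]\}$ that is identically zero, so that the color-list of cell $(i,j)$ is empty. For a fixed cell this occurs with probability $(1-p)^k$, and — crucially — these $n^2$ events are determined by pairwise disjoint sets of entries of $M$, hence are mutually independent. Therefore
\[
\Pr[M\text{ has no empty cell}] \;=\; \bigl(1-(1-p)^k\bigr)^{n^2} \;\le\; \exp\!\bigl(-n^2(1-p)^k\bigr),
\]
and $n^2(1-p)^k=\exp\bigl(2\log n-(1+o(1))pk\bigr)\to\infty$ in this range of $p$. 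So w.h.p.\ $M$ contains an empty cell, and thus no Latin box.

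\emph{The $1$-statement: setup.} Assume now $p\ge(1+\eta)\frac{2\log n}{(1+\varepsilon)n}$; then w.h.p.\ $M$ has no empty cell, and we must actually produce a Latin box. It is cleanest to pass to the edge-coloring picture: an $n\times n\times(1+\varepsilon)n$ Latin box is precisely a proper edge-coloring of $K_{n,n}$ in which edge $(i,j)$ receives a color from the random list $L(i,j)=\{c:M(i,j,c)=1\}$ (each color lying in $L(i,j)$ independently with probability $p$, so $|L(i,j)|=\Theta(\log n)$ typically); equivalently, a decomposition of $E(K_{n,n})$ into matchings $M_c\subseteq G_c$, where $G_c$ is the bipartite graph with $(i,j)\in G_c$ iff $c\in L(i,j)$, and the $G_c$ are independent random bipartite graphs on $n+n$ vertices with edge-probability $p$.

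\emph{The $1$-statement: construction.} The naive strategies fail once $\varepsilon$ is small, and this is exactly where the work lies. Building the box one row at a time (each row a column-saturating matching between the columns and the colors not yet used in that column), or random-greedy edge-coloring, gets stuck near the end: by then each line forbids all but $\Theta(n)$ of the colors, and a random list of size $\Theta(\log n)$ misses the remaining usable set with probability $n^{-\Theta(1)}$, so a polynomially large set of edges becomes uncolorable. The remedy is absorption. First run a semi-random (R\"odl-nibble / random-greedy) procedure to produce a proper list-coloring of all but a small remainder $B\subseteq E(K_{n,n})$, controlling $B$ so that $|B|\le n^{2-\delta}$ and $\Delta(B)\le n^{1-\delta}$ for some $\delta=\delta(\varepsilon)>0$; the standard nibble analysis, adapted to track the per-line palettes, gives this. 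Then absorb $B$: either by local switching (repeatedly recolor along short alternating structures to liberate a list-color for each edge of $B$, using that $B$ is sparse), or by a pre-built absorber — a small family of edges colored in advance in a flexible way, exploiting the $\varepsilon n$ surplus colors, into which every sufficiently small and sparse remainder can be swapped while preserving properness.

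\emph{The main obstacle.} The technical heart is the absorption step: constructing the flexible structure (or proving that the switching process terminates) and verifying that the random lists are rich enough for it. By contrast, once one is past the empty-cell barrier every quantity appearing in the nibble and in the absorption has polynomial room to spare, so the construction goes through for every $\eta>0$; this is what makes $\frac{2\log n}{(1+\varepsilon)n}$ a \emph{sharp} threshold, the tight obstruction being the empty-cell count of the $0$-statement. Finally, the regime $\varepsilon\ge1$ is softer — there each $G_c$ already contains a perfect matching w.h.p., and one can argue almost directly row by row — so the real difficulty is confined to $0<\varepsilon<1$.
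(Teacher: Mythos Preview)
Your $0$-statement is correct and matches the paper's argument exactly (the paper calls the lines $(r,c,\cdot)$ ``shafts'' and does the same independence computation).

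For the $1$-statement, your high-level shape (random-greedy bulk coloring, then handle a sparse remainder) is in the right spirit, but the paper's realization is both different from and more concrete than what you sketch. The paper does \emph{not} build an absorber or run a switching procedure. Instead it exploits the partition $[m]=[n]\sqcup([m]\setminus[n])$ of the color set. First it identifies the ``dangerous'' cells --- those $(r,c)$ whose list has fewer than $\tfrac{\varepsilon}{1+\varepsilon}\log n$ colors in the surplus range $[m]\setminus[n]$ --- and covers \emph{those} first with a partial box $B_2$ (there are only $n^{1-\delta}$ such cells per line, so this is easy greedily). It then runs a random-greedy packing using \emph{only} colors from $[n]$, obtaining a partial box $B_3$ with $o(n)$ uncovered positions per line (this is the nibble-type step, carried out in the appendix). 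Overwriting $B_3$ by $B_2$ yields $B_4$, every uncovered cell of which now has $\tfrac{\varepsilon}{1+\varepsilon}\log n$ colors available in the \emph{reserved} range $[m]\setminus[n]$, of which only $o(n)$ are used in its row or column. Completion is then a one-pass greedy argument, with no switching or absorber required. (The paper in fact proves the stronger hitting-time statement, passing through an auxiliary model in which each empty shaft is given a single random entry; this is how cells with lists of size exactly $1$ are handled.)

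The gap in your proposal is that you leave the absorption step as a black box --- ``either by local switching \ldots\ or by a pre-built absorber'' --- while explicitly flagging it as the technical heart. Neither option is worked out, and for list-edge-coloring with logarithmic lists it is not obvious that short alternating structures or small absorbers with the required flexibility exist. The paper sidesteps this entirely: the idea is not to absorb the remainder into an already-colored structure, but to \emph{reserve} the surplus $\varepsilon n$ colors for the remainder and to \emph{pre-process} the few cells for which that reservation would be insufficient. Your remark that $\varepsilon\ge 1$ is ``softer'' because each $G_c$ contains a perfect matching is also off: $G_c$ has a perfect matching w.h.p.\ for every fixed $\varepsilon>0$, and in any case per-color matchings do not directly assemble into a Latin box.
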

In fact, we prove a stronger result (Theorem \ref{thm:hitting_time}): In the random process where, starting with the all zeros array, at each step we flip a randomly chosen 0 to 1, then with high probability the first time at which the array contains a Latin box is equal to the time at which every line of the form $\left( r,c , \cdot \right)$ contains at least one 1.

\subsection{Notation}

We use asymptotic notation in the usual way. For example, if $f,g:{\N\to(0,\infty)}$, then $f(n) = O \left( g(n) \right)$ means that $\limsup_{n\to\infty} f(n) / g(n) < \infty$. We also make use of asymptotic notation in arithmetic expressions. For example, by $f(n) = n + e^{O \left( g(n) \right)}$ we mean that there exists a function $h$ satisfying $h(n) = O \left( g(n) \right)$ and $f(n) = n + e^{h(n)}$.

\section{Proof of Theorem \ref{thm:Latin}} \label{sec:Latin}

\begin{proof}[Proof of Theorem \ref{thm:Latin}]

For $p \in (0,1)$, we define $F = \{2^k:k \in \mathbb{N} \}$, and give a recursive bound on 
\[
p_k = \Pr \left[ M \sim \mnp{2^k}{2^k}{2^k}{p} \text{ contains a Latin square} \right] .
\]
Consider first the case $k=1$. The probability that $M \sim \mnp{2}{2}{2}{p}$ contains a given order-$2$ Latin square is $p^4$. As there are exactly two such Latin squares, and they are disjoint, by the inclusion-exclusion principle the probability that $M$ contains a Latin square is $q(p) \coloneqq 2p^4-p^8$. 

For $k>1$, we view $M \sim \mnp{2^k}{2^k}{2^k}{p}$ as a $2 \times 2 \times 2$ block array, where each block is distributed according to $\mnp{2^{k-1}}{2^{k-1}}{2^{k-1}}{p}$. If there is an order-$2$ Latin square $L$ such that the blocks in $M$ corresponding to the $1$s of $L$ all contain order-$2^{k-1}$ Latin squares, then the union of these squares is a Latin square contained in $M$.

The probability that this happens is $q(p_{k-1})$, and so we have $p_k \geq q(p_{k-1})$ and $p_1=q(p)$. We note that the equation $q(x) = x$ has a unique solution $p^* \in (0,1)$, and that for $x \in \left( p^* , 1 \right)$, $q(x) > x$. Therefore, if $p>p^* \approx 0.9206$, the sequence $\{p_k\}_{k=1}^\infty$ is monotone increasing and bounded and hence convergent. Let $p' = \lim_{k\to\infty}p_k$. As $q$ is continuous and increasing on $(p^*,1]$ we have $p' \leq q \left( p' \right) = \lim_{k\to\infty} q(p_k) \leq \lim_{k\to\infty} p_{k+1} = p'$. The unique fixed point of $q$ in the interval $(p^*,1]$ is $1$, and so $p' = 1$.

\end{proof}

In order to obtain a better bound on $p$, 
one can in principle repeat the above argument for any fixed $n_0$. The probability that $M \sim \mnp{n_0}{n_0}{n_0}{p}$ contains a Latin square is given by some polynomial $q_{n_0}(p)$. One can compute $q_{n_0}(p)$ by listing all order-$n_0$ Latin squares and applying the inclusion-exclusion principle to calculate the probability that $M$ contains one of them. 

It is possible to show that there exists some $p_{n_0}^* \in (0,1)$, such that for $p$ between $p_{n_0}^*$ and $1$, $q_{n_0}(p)>p$. Indeed, fix two disjoint order-$n_0$ Latin squares, and let $\tilde{q}(p) = 2 p^{n_0^2}-p^{2 n_0^2}$ be the probability that $M \sim \mathcal{M}(n_0,n_0,n_0;p)$ contains at least one of them.
Clearly, $q_{n_0}(p) \geq \tilde{q}(p)$, and when $1-1/(2n_0^4)<p<1$ one can check that $\tilde{q}(p)>p$.

Set $F = \{n_0^k : k \in \N \}$.
Consider an $n_0^k \times n_0^k \times n_0^k$ 0-1 array $A$ as an $n_0 \times n_0 \times n_0$ block array consisting of $n_0^3$ blocks, each of which is an $n_0^{k-1}\times n_0^{k-1} \times n_0^{k-1}$ 0-1 array. 
We say that $A$ is a \emph{block} Latin square if there is some order-$n_0$ Latin square $L$ such that each block of $A$ is an order-$n_0^{k-1}$ Latin square if the corresponding element of $L$ is 1, or the all zero array otherwise.

Now, the probability $p_k$ that $M \sim \mnp{n_0}{n_0}{n_0}{p}$ contains a Latin square is bounded below by the probability that it contains a block Latin square, which is $q_{n_0}(p_{k-1})$. Therefore, if $p>p^*_{n_0}$, we have $\lim_{k \rightarrow \infty} p_k = 1$. For example, performing this calculation for $n_0 = 3$ gives $p_3^*\approx 0.86$. As a practical matter, however, this procedure seems computationally infeasible for much larger values of $n_0$.

\section{Proof of Theorem \ref{thm:latin_rectangle}} \label{sec:rectangle}

It is easy to show that for small $p$, with high probability $M \sim \mnp{(1-\varepsilon)n}{n}{n}{p}$ has an empty line of the form $M(i,j,\cdot)$. Indeed, the number of such lines is distributed binomially with parameters $(1 - \varepsilon )n^2, (1-p)^n$, and when $p<\frac{1}{2} \log(n)/n$, a second moment argument shows that with high probability there is such an empty line. In this case $M$ does not contain a Latin box.

For the upper bound, we show that for every $\varepsilon > 0$, there is a constant $C > 0$ depending only on $\varepsilon$ such that if $p \geq C \log (n) / n$, then w.h.p.\ $M \sim \mnp{(1-\varepsilon)n}{n}{n}{p}$ contains a Latin box. We present a randomized algorithm for finding a Latin box, and show that with high probability it succeeds.

Note that a Latin box in $M$ is a sequence of $\left(1-\varepsilon \right)n$ disjoint permutation matrices $P_i$, one in each plane of the form $M_i := M \left(i,\cdot,\cdot \right)$. Therefore, a natural algorithm to consider is to deal with these plane one by one, at each step choosing a permutation matrix supported by $M_i$ that does not conflict with previous choices.

To analyze this algorithm, consider the $i$-th step. At this stage, $(i-1)$ permutation matrices have already been chosen, ruling out exactly $(i-1)$ entries in each row and column of $M_i$. Our task is to find a permutation matrix supported by the remaining elements of $M_i$. 

Any $n\times n$ 0-1 matrix is the biadjacency matrix of a bipartite graph on $n + n$ vertices. In this language, the elements that have not been ruled out correspond to a regular bipartite graph $G_i$, and we want to find a perfect matching of a random subgraph of $G_i$, in which we keep each edge with probability $p$.

It is well known that with high probability a random bipartite graph has a perfect matching when it has no isolated vertices, which happens around $p=\log(n)/n$. The same holds for a random subgraph of a dense regular bipartite graph: Goel, Kapralov, and Khanna \cite[Theorem 2.1]{goel2010perfect} proved that there exists a constant $C>0$ s.t.\ if $G$ is a $k$-regular bipartite graph on $2n$ vertices, then a random subgraph of $G$ in which each edge is retained independently with probability $p = C n \log(n) / k^2$ contains a perfect matching with high probability. A careful analysis of their proof shows that if $C$ is large enough then the probability of failure is $o(1/n)$. In our context this implies that if $p \geq C \log(n) / \left( \varepsilon^2 n \right)$, then w.h.p. $M \sim \mnp{(1-\varepsilon)n}{n}{n}{p}$ contains a Latin box.

The arguments above determine the threshold for the appearance of Latin boxes in $\mnp{(1-\varepsilon)n}{n}{n}{p}$. In order to prove that w.h.p.\ $M$ contains close to the expected number of Latin boxes,
we modify the algorithm by requiring that each permutation matrix be chosen uniformly at random. As we will show, this ensures that with high probability the graphs $G_i$ are all pseudorandom.
We then prove that with high probability, a random subgraph of a sufficiently dense pseudorandom regular bipartite graph has many perfect matchings. 

Suppose that $f(n) = \omega(1)$ and $p = f(n)\frac{\log(n)}{n}$. Set $\delta = \max(f(n)^{-1/3},1/n) = o(1)$. Wherever necessary we assume that $n$ is sufficiently large for asymptotic inequalities to hold. Formally, at the $i$-th step we choose a permutation matrix uniformly at random from the set of permutation matrices supported by $M_i$ that are disjoint from previous choices. Now, set $k=k(i):=n-i+1$, and set $L=L(i):=(1-\delta)kp$. If the number of choices at step $i$ is less than $L^n \frac{n!}{n^n}$, the algorithm aborts.
We will show that with high probability, the algorithm does not abort, and therefore it succeeds in finding a Latin box. This implies the enumeration result.

Indeed, let $A$ be the number of Latin boxes supported by $M$. The probability of a specific Latin box being chosen by the algorithm is at most 
\[
Q = \prod_{i=1}^{(1-\varepsilon)n}{\left(L(i)^n \frac{n!}{n^n}\right)^{-1}}.
\]
Therefore, the probability that the algorithm succeeds is at most $AQ$. On the other hand, the algorithm succeeds w.h.p.\ and therefore, applying Stirling's approximation,
\[
A \geq (1-o(1))/Q = \left( \left( 1 \pm o(1) \right) \left( \frac{1}{\varepsilon} \right)^{\varepsilon/(1-\varepsilon)} \frac{n}{e^2} p \right)^{(1-\varepsilon)n^2}.
\]

The upper bound on $A$ follows from Markov's inequality, together with the observation that 
\[
\mathbb{E}[A] = \left( \left( 1 + o(1) \right) \left( \frac{1}{\varepsilon} \right)^{\varepsilon/(1-\varepsilon)} \frac{n}{e^2} p \right)^{(1-\varepsilon)n^2}.
\]

As described above, let $G_i$ be the $k$-regular bipartite graph corresponding to the elements that were not ruled out by previous choices. Let $H_i$ be the intersection of $G_i$ with the graph corresponding to $M_i$. Thus, $H_i$ is distributed as a random subgraph of $G_i$, where each edge is kept with probability $p$. It suffices to show that with high probability the graphs $H_i$ all have sufficiently many perfect matchings.

We say that a $k$-regular bipartite graph $G = \langle U \cup V,E\rangle$ is \termdefine{$c$-pseudorandom} if for every $X \subseteq U,Y \subseteq V$ such that $|X|,|Y| \geq \frac{\varepsilon}{10} n $, the number $E_G(X,Y)$ of edges between $X$ and $Y$ is at least $(1-c) |X||Y|\frac{k}{n}$. Our general strategy is to show that with high probability the graphs $G_i$ are all sufficiently pseudorandom, and that this implies the desired property for the graphs $H_i$. 

The following lemma will enable us to bound the number of perfect matchings in $H_i$ provided that $G_i$ is pseudorandom.

\begin{lemma}\label{lem:typicality}
	Let $G$ be a $k$-regular $\delta^{1/3}$-pseudorandom graph, and let $H$ be a random subgraph of $G$, in which each edge of $G$ survives with probability $p$. With probability $1 - n^{-\omegaone}$, the graph $H$ contains an $L$-factor, i.e.\ an $L$-regular spanning subgraph, where $L=(1-\delta)kp$.
\end{lemma}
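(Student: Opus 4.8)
The plan is to find the $L$-factor inside $H$ by a union-of-perfect-matchings argument combined with a Hall-type condition. Concretely, I would try to extract $L$ edge-disjoint perfect matchings from $H$ greedily: having removed $j < L$ perfect matchings, the leftover graph $H_j$ is $(d_0(v) - j)$-irregular only mildly, so it suffices to show that at each of the $L$ stages there is a perfect matching in (a regular subgraph of) $H_j$. The cleaner route, which avoids tracking degree drift, is instead to show directly that $H$ contains an $L$-regular spanning subgraph via the defect version of the Gale--Ryser / max-flow criterion: an $L$-factor exists in $H$ iff for every $S \subseteq U$ and $T \subseteq V$,
\[
e_H(S,T) \;\geq\; L\bigl(|S| + |T| - n\bigr).
\]
So the whole lemma reduces to verifying this inequality for all pairs $(S,T)$, with high probability over the choice of $H$.

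The verification splits into two regimes according to the sizes of $S$ and $T$. When $|S|, |T| \geq \frac{\varepsilon}{10} n$, pseudorandomness of $G$ gives $e_G(S,T) \geq (1 - \delta^{1/3})|S||T|\frac{k}{n}$; since $H$ keeps each edge with probability $p$ and $|S||T|\frac{k}{n} p$ is of order at least $n p k /n \cdot (\varepsilon/10)^2 = \Omega(\varepsilon^2 k p)$, which is $\omega(n \log n / n) = \omega(\log n)$ — in fact polynomially large in the relevant range — a Chernoff bound shows $e_H(S,T) \geq (1 - 2\delta^{1/3})|S||T|\frac{k}{n} p$ except with probability $\exp(-\Omega(|S||T| k p / n))$. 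The number of pairs $(S,T)$ with $|S| = s, |T| = t$ is at most $\binom{n}{s}\binom{n}{t} \leq 2^{2n}$, and one checks that $\exp(-\Omega(s t k p /n))$ beats $2^{2n}$ comfortably once $s, t \geq \frac{\varepsilon}{10} n$ and $p = f(n)\log(n)/n$ with $f = \omega(1)$; so a union bound over all such pairs still leaves failure probability $n^{-\omega(1)}$. Finally one verifies the arithmetic: $(1 - 2\delta^{1/3})|S||T|\frac{k}{n}p \geq (1-\delta)kp\,(|S| + |T| - n) = L(|S|+|T|-n)$, which holds because $\frac{|S||T|}{n} \geq |S| + |T| - n$ always (equivalently $(n - |S|)(n - |T|) \geq 0$) and the constants match after absorbing $\delta$-errors.

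When one of $S, T$ is small, say $|S| < \frac{\varepsilon}{10}n$, the right-hand side $L(|S| + |T| - n) \leq L \cdot |S|$ is small — at most $(1-\delta)kp \cdot \frac{\varepsilon}{10}n$ — while the left-hand side can be bounded below using only the (random) degrees in $H$: each vertex of $S$ has degree $k$ in $G$, hence degree concentrated around $kp$ in $H$, so $e_H(S,T) \geq e_H(S, V) - |S| \cdot (\text{max degree into } V\setminus T)$. The cleanest sub-case analysis is: (i) if $|S| + |T| \leq n$ the inequality is trivial since the RHS is nonpositive; (ii) if $|S| + |T| > n$ but $|S| < \frac{\varepsilon}{10}n$, then $|T| > (1 - \frac{\varepsilon}{10})n$, so $V \setminus T$ is small and $e_H(S, T) \geq \sum_{u \in S} \deg_H(u) - e_H(S, V\setminus T) \geq |S|(1-\delta^{1/3})kp - e_H(S, V \setminus T)$; since $|V \setminus T| < \frac{\varepsilon}{10}n$ is small we can bound $e_H(S, V\setminus T)$ crudely by a Chernoff estimate on $e_G(S, V\setminus T) \leq$ (trivially) $k |S|$, but better: $e_H(S, V \setminus T) \leq p|S||V\setminus T|\cdot\frac{k}{n}\cdot(1+o(1)) + O(\text{small})$ with overwhelming probability, and $p|S||V\setminus T|\frac{k}{n}$ is far smaller than $|S|kp$. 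Comparing, $e_H(S,T) \geq |S|kp(1 - o(1)) \geq L(|S| + |T| - n)$ since $|S| + |T| - n \leq |S|$. The symmetric sub-case $|T| < \frac{\varepsilon}{10}n$ is identical.

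The main obstacle is the small-set regime, item (ii) above: pseudorandomness says nothing about edge counts when one side has fewer than $\frac{\varepsilon}{10}n$ vertices, so we genuinely need the individual-degree concentration of $H$ (every vertex has $H$-degree $(1 \pm \delta^{1/3})kp$ w.h.p., by Chernoff plus a union bound over the $2n$ vertices, which costs only $n \cdot e^{-\Omega(kp)} = n^{-\omega(1)}$ since $kp = \Omega(\varepsilon n \cdot f(n)\log n / n) = \omega(\log n)$) and a careful bound on $e_H(S, V \setminus T)$ for small $V \setminus T$. One must make sure the error terms there are genuinely of lower order than $L|S|$; this is where tracking the $\delta = \max(f(n)^{-1/3}, 1/n)$ scale matters, and it works precisely because $\delta^{1/3} \cdot kp$ and $\frac{\varepsilon}{10} kp$ differ by a factor $\omega(1)$. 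Assembling the two regimes and taking a union bound over the (at most $4^n$) pairs $(S,T)$ then gives the claimed $1 - n^{-\omega(1)}$ success probability.
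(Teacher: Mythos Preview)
Your overall approach---verify the defect-Hall/Gale--Ryser criterion $e_H(S,T) \geq L(|S|+|T|-n)$ for all pairs by Chernoff plus a case split on set sizes---is exactly what the paper does. But your Regime~1 arithmetic has a real gap. The inequality $\frac{|S||T|}{n} \geq |S|+|T|-n$ holds with \emph{equality} when $|S|=n$ or $|T|=n$, so when both $|S|,|T|$ are close to $n$ there is no slack to absorb the multiplicative loss: you need $(1-2\delta^{1/3})\tfrac{|S||T|}{n} \geq (1-\delta)(|S|+|T|-n)$, but at $|S|=|T|=n$ the left side is $(1-2\delta^{1/3})n$ while the right side is $(1-\delta)n$, and $2\delta^{1/3} \gg \delta$. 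The sentence ``the constants match after absorbing $\delta$-errors'' is exactly where this breaks. Note this is not a Chernoff issue: even the \emph{expectation} bound from pseudorandomness, $(1-\delta^{1/3})\tfrac{|S||T|}{n}kp$, already falls short of $Ln$ at $S=U$, $T=V$.

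The paper's fix is to work with the complements $X=U\setminus S$, $Y=V\setminus T$ and to require, in its ``large'' case, that additionally $|Y|\geq \tfrac{\varepsilon}{10}n$ (equivalently $|T|\leq (1-\tfrac{\varepsilon}{10})n$). This forces a genuine constant-factor gap
\[
\frac{n(|S|+|T|-n)}{|S||T|} = 1 - \frac{|X||Y|}{|S||T|} \leq 1 - \Bigl(\tfrac{\varepsilon}{10}\Bigr)^2,
\]
which absorbs a constant Chernoff deviation. The leftover pairs with $|Y|<\tfrac{\varepsilon}{10}n$ (including $Y=\emptyset$) are handled not via pseudorandomness but via the exact $k$-regularity of $G$: one bounds $e_G(S,T)\geq k|S|-e_G(S,V\setminus T)$ using only the trivial estimates $e_G(S,V\setminus T)\leq k|Y|$ or $\leq |S||Y|$, and then applies Chernoff with deviation~$\delta$. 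Your Regime~2 handles $|S|$ small but not the case where $|S|,|T|$ both exceed $\tfrac{\varepsilon}{10}n$ while $|V\setminus T|$ is small---precisely the gap above. (Relatedly, your claimed bound $e_H(S,V\setminus T)\leq p|S||V\setminus T|\tfrac{k}{n}(1+o(1))$ is unjustified, since pseudorandomness says nothing about $e_G$ on small sets; the regularity bounds are what you need there.)
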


The next lemma asserts that if the algorithm did not abort before the $i$-th step, then with high probability $G_i$ is $\delta^{1/3}$-pseudorandom. Its proof is reminiscent of the proof of \cite[Theorem 2]{kwan2016intercalates}.

\begin{lemma}\label{lem:Matthew's_method}
	Let $1 \leq i \leq (1 - \varepsilon)n$. Conditioned on the number of perfect matchings in $H_j$ being at least $L(j)^n \frac{n!}{n^n}$ for every $j<i$, the probability that $G_i$ is not $\delta^{1/3}$-pseudorandom is at most $\exp(-\Omega(n))$.
\end{lemma}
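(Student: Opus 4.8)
Fix $X\subseteq U$ and $Y\subseteq V$ with $|X|,|Y|\ge\frac{\varepsilon}{10}n$. Writing $R_i:=\bigcup_{j<i}P_j$, the graph $G_i$ is the complement of $R_i$ in $K_{n,n}$, and since $G_i$ is $k$-regular we have $E_{G_i}(X,Y)=|X||Y|-|R_i\cap(X\times Y)|$. Thus $G_i$ fails the pseudorandomness condition on this pair precisely when
\[
W:=|R_i\cap(X\times Y)|>(i-1)\frac{|X||Y|}{n}+\delta^{1/3}\frac{k|X||Y|}{n}.
\]
Since there are fewer than $4^n$ relevant pairs, it suffices to bound the probability of this event, conditioned on no abort before step $i$, by $\exp(-\Omega(n))$ with an implied constant exceeding $\log 4$. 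As $|X|,|Y|\ge\frac{\varepsilon}{10}n$ and $k=n-i+1\ge\varepsilon n$, the slack $\delta^{1/3}\frac{k|X||Y|}{n}$ equals $\Omega(\delta^{1/3}n^2)$; this is our entire deviation budget.

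I would run this as part of a joint induction on $i$, so that when treating step $i$ one may assume $G_j$ is $\delta^{1/3}$-pseudorandom for every $j<i$, the accumulated exceptional probability being $\exp(-\Omega(n))$. Because $M_j$ is independent of everything revealed in steps $<j$, the graph $H_j=G_j\cap M_j$ is then a $p$-random subgraph of a $\delta^{1/3}$-pseudorandom $k$-regular graph; Chernoff bounds together with a union bound over subsets (each term failing with probability $\exp(-\Omega(n\log n))$, as $|X'||Y'|kp/n=\Omega(n\log n)$) show that with overwhelming probability every $H_j$ is $O(\delta^{1/3})$-pseudorandom with all degrees $(1\pm o(1))kp$.

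Now expose the values $\sigma_j(r)$, where $\sigma_j$ is the permutation of $P_j$, one cell at a time in lexicographic order of $(j,r)$, and let $(D_m)$ be the resulting Doob-martingale increments of $W$. The estimate rests on three points. First, \emph{the increments are $O(1)$}: revealing a value $\sigma_j(r_0)$ changes $W$ by $\mathbf{1}[\sigma_j(r_0)\in Y]$ plus the induced change in the conditional expectations of the not-yet-exposed cells, and the latter sums to $O(1)$ because conditioning a uniformly random perfect matching of a pseudorandom graph on one further matched pair shifts each edge-marginal by $O(1/n)$, and freezing a single edge of $R_i$ shifts the relevant marginals of $H_{j+1},H_{j+2},\dots$ by $O(1/n)$ apiece. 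Second, \emph{the predictable quadratic variation is $O(n^2)$}: only $O(n^2)$ cells are exposed, each with $D_m^2=O(1)$ (equivalently, by negative association of the edge-indicators of a random perfect matching, $\sum_{j<i}\mathrm{Var}(|\{r\in X:\sigma_j(r)\in Y\}|\mid\sigma_{<j})\le\sum_{j<i}\E[\,\cdot\mid\sigma_{<j}]=O(n^2)$). Third, \emph{the compensator is small}:
\[
\sum_{j<i}\sum_{r\in X}\Pr[\sigma_j(r)\in Y\mid\sigma_{<j}]\le(i-1)\frac{|X||Y|}{n}+\tfrac12\delta^{1/3}\frac{k|X||Y|}{n}.
\]
Granting these, Freedman's inequality applied to $\sum_m D_m$ with deviation $\lambda=\tfrac12\delta^{1/3}\frac{k|X||Y|}{n}=\Theta(\delta^{1/3}n^2)$, quadratic variation $O(n^2)$, and increment bound $O(1)$ gives $\Pr[\sum_m D_m\ge\lambda]\le\exp(-\Omega(\delta^{2/3}n^2))=\exp(-\Omega(n^{4/3}))$, which easily absorbs the union bound over the $4^n$ pairs $(X,Y)$ and the $O(n)$ indices $j$ entering the third point.

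The crux is the compensator estimate. It is equivalent to saying that for each $j$ the perfect-matching marginal matrix $\Pi^{(j)}$, with entries $\Pi^{(j)}_{rc}=\Pr[\sigma_j(r)=c\mid\sigma_{<j}]$, places mass at most $(1+O(\varepsilon\delta^{1/3}))|X||Y|/n$ on $X\times Y$, with a constant small enough that the per-block errors, summed over $i-1\le n$ blocks, stay below half the slack. This does not follow merely from $\Pi^{(j)}$ being doubly stochastic and supported on a pseudorandom graph (a single permutation matrix is a counterexample); one must use that $\Pi^{(j)}$ is the average over the perfect matchings of the nearly regular, $O(\delta^{1/3})$-pseudorandom graph $H_j$, of which there are at least $L(j)^n n!/n^n$ by the no-abort hypothesis. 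Concretely it suffices to show that a typical edge-marginal $\Pi^{(j)}_{rc}$ is within a $(1+O(\delta^{1/3}))$ factor of $1/\deg_{H_j}(r)$, since then $\sum_{r\in X}\Pr[\sigma_j(r)\in Y\mid\sigma_{<j}]$ is $(1+O(\delta^{1/3}))\,E_{H_j}(X,Y)/((1+o(1))kp)=(1+O(\delta^{1/3}))|X||Y|/n$ by the pseudorandomness and near-regularity of $H_j$. This marginal-uniformity is the heart of the matter, and I would prove it in the spirit of \cite[Theorem 2]{kwan2016intercalates} by a switching (or permanent-ratio) argument that exploits the pseudorandomness and $\omega(\log n)$-regularity of $H_j$ and compares the number of perfect matchings using a given edge with the lower bound $L(j)^n n!/n^n$ on $\mathrm{perm}(H_j)$; the final removal of the randomness in $M_j$ is a routine concentration step. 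The bounded-increment claim of the previous paragraph, while more local, is a second technical hurdle, likewise requiring Bregman-type permanent-ratio estimates.
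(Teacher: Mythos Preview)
Your approach is fundamentally different from the paper's, and the place you yourself flag as ``the crux'' is exactly where the real gap lies.

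The paper never attempts to control the marginals of the random perfect matching. Its argument is an entropy/counting one: it proves a \emph{pointwise} bound, showing that for every $k$-regular bipartite graph $G$,
\[
\Pr[G_i = G \mid \overline{C_i}] \le \binom{n}{k}^{-n} e^{2n(\delta i + \alpha(i))},
\qquad \alpha(i)=O(\log^2 n).
\]
This is obtained by summing over $\nu \in M(\overline{G})$: the no-abort hypothesis says $H_{i-1}$ has at least $L(i-1)^n n!/n^n$ matchings, so $\Pr[\mu=\nu]$ is at most $(1+o(1))p^n n^n/(L(i-1)^n n!)$ for each specific $\nu$; combining this with the inductive pointwise bound on $\Pr[G_{i-1}=G\cup\nu]$ and Br\'egman's inequality $|M(\overline{G})|\le((i-1)!)^{n/(i-1)}$ gives the display. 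The lemma then follows by multiplying by the number of non-pseudorandom $k$-regular graphs, which Lemma~\ref{lem:bad graphs bound} bounds by $\binom{n}{k}^n\exp(-\Omega(\delta^{2/3}n^2))$. No martingale, no Freedman, no marginal estimates.

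Your route, by contrast, requires $\Pi^{(j)}_{rc}=(1+O(\delta^{1/3}))/\deg_{H_j}(r)$ uniformly, with error small enough to survive summation over $\Theta(n)$ layers. You are right that this does not follow from double stochasticity plus pseudorandomness of the support, and the switching/permanent-ratio sketch you offer does not deliver it either: those techniques compare permanents under local edge modifications, not global marginal uniformity to multiplicative accuracy $1+O(\delta^{1/3})$ in a graph that is only $\Theta(\log n)$-regular. Your $O(1)$-increment claim has a parallel difficulty: conditioning a random matching of a $\Theta(\log n)$-regular graph on one edge can shift individual marginals by $\Theta(1/\log n)$, and the cascade through the subsequent $G_{j'}$ (each of which then feeds into a fresh random matching) is not obviously summable to $O(1)$ without again invoking the very marginal-stability you have not established. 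The insight you are missing is that the no-abort lower bound on the \emph{number} of matchings translates directly into an upper bound on $\Pr[\sigma_j=\nu]$ for every fixed $\nu$, and that this pointwise control, rather than distributional control, is all one needs.
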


The Egorychev--Falikman theorem \cite{egorychev1981solution,falikman1981proof} states that the permanent of an order-$n$ doubly stochastic matrix is minimized by the matrix whose entries are all $1/n$, and is equal to $\frac{n!}{n^n}$. As the biadjacency matrix of an $L$-regular bipartite graph on $2n$ vertices is $L$ times a doubly stochastic matrix, this theorem implies that such a graph has at least  $L^n \frac{n!}{n^n}$ perfect matchings. In particular, if $H$ contains an $L$-factor, then $H$ has at least $L^n \frac{n!}{n^n}$ perfect matchings.

We now show how Lemmas \ref{lem:typicality} and \ref{lem:Matthew's_method} imply that w.h.p.\ the algorithm does not abort.

	Let $A_i$ be the event that $G_i$ is not $\delta^{1/3}$-pseudorandom, and let $B_i$ be the event that $H_i$ has less than $L(i)^n \frac{n!}{n^n}$ perfect matchings. For convenience, for $1 \leq i \leq m+1$ we define $C_i = \cup_{j<i} B_j$. We want to show that $\Pr \left[ C_{m+1} \right]$, which is the probability that the algorithm aborts, is $\oone$.

	We prove this by induction. We assume that $\Pr [C_i] = \oone$, and prove that $\Pr [C_{i+1}] = \oone$.

	\begin{align*}
		\Pr \left[ C_{i+1} \right] = \Pr [\cup_{j:j\leq i} B_j] \leq \sum_{j \leq i} \Pr [B_j \given \cap_{\ell < j} \overline{B_\ell}] = \sum_{j \leq i} \Pr [B_j \given \overline{C_j}].
	\end{align*}
	Now,
	\[
	\Pr \left[ B_j| \overline{C_j} \right] \leq \Pr \left[B_j \given \overline{C_j},\overline{A_j} \right] + \Pr \left[A_j \given \overline{C_j} \right],
	\]
	and
	\begin{align*}
 		& \Pr \left[ B_j \given \overline{C_j} , \overline{A_j} \right]
        \leq  \frac{1}{ \Pr \left[ \overline{C_j}  \given \overline{A_j} \right]} \Pr \left[B_j \given \overline{A_j} \right].
\end{align*}
Applying Bayes' theorem, this is equal to
\begin{align*}
& \frac{\Pr \left[ \overline{A_j}  \right]}{\Pr \left[\overline{C_j} \right] \Pr \left[\overline{A_j} \given  \overline{C_j}   \right] } \Pr \left[B_j \given \overline{A_j} \right] \leq (1+o(1)) \Pr \left[B_j \given \overline{A_j} \right].
\end{align*}
	The inequality follows from the induction hypothesis and Lemma \ref{lem:Matthew's_method}. Thus,
	\[
	\Pr \left[ C_{i+1} \right] \leq (1+o(1))   \sum_{j \leq i } \Pr \left[ B_j \given \overline{A_j} \right] + \sum_{j \leq i } \Pr \left[A_j \given \overline{C_j} \right].
	\]
	Now, by Lemma \ref{lem:typicality}, $\Pr \left[B_j|\overline{A_j} \right] = n^{-\omegaone}$, and by Lemma \ref{lem:Matthew's_method} we have $\Pr \left[A_j \given \overline{C_j} \right] = e^{-\Omega(n)}$. This implies that $\Pr [C_{i+1}] = \oone$, completing the inductive proof. Thus, w.h.p.\ the algorithm does not abort.

We turn to prove the lemmas. In what follows, we will make repeated use of the following version of Chernoff's inequality.

\begin{thm}[Chernoff's inequality]\label{thm:chernoff}
Let $X_1, ... ,X_N$ be i.i.d.\ Bernoulli random variables with $\Pr(X_i=1)=p$ for all $i$, and let $Z = \sum_{i=1}^N{X_i}$. Then for all $\alpha>0$ it holds that
\begin{align*}
& \Pr[Z<(1-\alpha) N p] \leq \exp\left(-\frac{\alpha^2 N p}{2}\right),\\
& \Pr[Z > (1 + \alpha) N p] \leq \exp \left(-\frac{\alpha^2 N p}{3}\right).
\end{align*}

\end{thm}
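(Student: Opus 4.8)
The plan is to use the standard exponential-moment (Chernoff--Bernstein) method, handling the two tails separately. For the upper tail, fix a parameter $t>0$ and apply Markov's inequality to the nonnegative variable $e^{tZ}$, which gives $\Pr[Z>(1+\alpha)Np]\le e^{-t(1+\alpha)Np}\,\E[e^{tZ}]$. By independence, $\E[e^{tZ}]=\prod_{i=1}^N\E[e^{tX_i}]=(1-p+pe^t)^N$, and since $1+x\le e^x$ this is at most $\exp\bigl(Np(e^t-1)\bigr)$. The resulting exponent $-t(1+\alpha)Np+Np(e^t-1)$ is minimized at $t=\log(1+\alpha)$, and substituting this value yields $\Pr[Z>(1+\alpha)Np]\le\bigl(e^{\alpha}/(1+\alpha)^{1+\alpha}\bigr)^{Np}$.

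For the lower tail I would repeat the argument with $e^{-tZ}$ for $t>0$: Markov gives $\Pr[Z<(1-\alpha)Np]\le e^{t(1-\alpha)Np}\,\E[e^{-tZ}]\le\exp\bigl(t(1-\alpha)Np+Np(e^{-t}-1)\bigr)$, and optimizing the exponent at $t=-\log(1-\alpha)$ (legitimate since one may assume $0<\alpha<1$, as otherwise the left-hand side is $0$) gives $\Pr[Z<(1-\alpha)Np]\le\bigl(e^{-\alpha}/(1-\alpha)^{1-\alpha}\bigr)^{Np}$.

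It then remains to establish the two elementary inequalities $e^{-\alpha}/(1-\alpha)^{1-\alpha}\le e^{-\alpha^2/2}$ and $e^{\alpha}/(1+\alpha)^{1+\alpha}\le e^{-\alpha^2/3}$, which convert the displays above into the claimed bounds. After taking logarithms, each reduces to showing that a one-variable function vanishing at $\alpha=0$ has the correct sign; for example, with $f(\alpha)=(1-\alpha)\log(1-\alpha)+\alpha-\alpha^2/2$ one checks $f(0)=0$ and $f'(\alpha)=-\log(1-\alpha)-\alpha\ge0$, the last inequality being immediate from the series $-\log(1-\alpha)=\sum_{j\ge1}\alpha^j/j\ge\alpha$. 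The second inequality is treated in the same way using the expansion of $(1+\alpha)\log(1+\alpha)$ around $\alpha=0$.

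I do not expect a genuine obstacle: the exponential-moment method is entirely routine and the estimates above are standard. The only point that demands a little care is the second elementary inequality, since $(1+\alpha)\log(1+\alpha)-\alpha\ge\alpha^2/3$ fails for large $\alpha$; one should therefore either restrict attention to $\alpha\le1$ or simply note that every application of this bound in the paper is made with $\alpha=\oone$, where the inequality certainly holds.
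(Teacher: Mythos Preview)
The paper does not prove Theorem~\ref{thm:chernoff}; it merely states this version of Chernoff's inequality as a standard tool before applying it in the proofs of Lemmas~\ref{lem:typicality}, \ref{lem:bad graphs bound}, and \ref{lem:low_deg}. Your exponential-moment argument is the textbook proof and is correct, and your observation about the upper-tail constant is well taken: the simplification $(1+\alpha)\log(1+\alpha)-\alpha\ge\alpha^2/3$ indeed fails for large $\alpha$, so the theorem as stated is only valid for $\alpha$ in a bounded range (e.g.\ $\alpha\le 1$). As you note, every invocation in the paper uses $\alpha=\oone$, so nothing downstream is affected.
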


\begin{proof}[Proof of Lemma \ref{lem:typicality}:]

We use the following generalization of Hall's theorem, which can be found, e.g., in \cite[Theorem 3]{alon1998perfect}.

\begin{thm}
\label{thm:Hall}
Let $G = \langle U \cup V, E\rangle$ be a balanced bipartite graph on $2n$ vertices. Then $G$ has an $L$-factor if and only if for all $X \subseteq U,Y\subseteq V$ it holds that
\[
E_G(U \setminus X,V \setminus Y) \geq (n-|X|-|Y|)  L.
\]
\end{thm}

Let $X \subseteq U,Y\subseteq V$, and assume without loss of generality that $|X| \geq |Y|$ and that $|X|+|Y|<n$.
We say that the pair $X,Y$ is a \termdefine{bad pair} if the random variable $Z=Z_{X,Y}:=E_H(U \setminus X,V \setminus Y)$ is smaller than $(n-|X|-|Y|)  L$. Our goal is to show that the probability that there exists a bad pair in $H$ is $n^{-\omegaone}$.

We consider three cases.

\begin{itemize}
\item[] \textbf{Case 1:} $n-|X|\geq \frac{\varepsilon}{10} n$ and $|Y|\geq \frac{\varepsilon}{10} n$. Now, $n-|Y|\geq \frac{\varepsilon}{10} n$, because $|Y| \leq |X|$, and so by the pseudorandomness of $G$, we have 
\[
E_G(U \setminus X,V \setminus Y) \geq (1-\delta^{1/3})(n-|X|)(n-|Y|)\frac{k}{n}.
\] 
Hence,
\[
\E [Z] \geq (1-\delta^{1/3})(n-|X|)(n-|Y|)\frac{k p }{n} = \Omega \left( n \log n \right).
\]
Now, we want to bound the probability that $Z <  (1-\delta) k p (n-|X|-|Y|)$.
Note that
\begin{align*}
\begin{split}
(1-\delta) & k p (n-|X|-|Y|) \leq 
\frac{ (1-\delta) k p (n-|X|-|Y|) }{(1-\delta^{1/3})(n-|X|)(n-|Y|)\frac{k p }{n}} \mathbb{E}[Z] \\
& = \frac{1-\delta}{1-\delta^{1/3}}   \frac{n(n-|X|-|Y|)}{(n-|X|)(n-|Y|)}   \mathbb{E}[Z] \\
& = \frac{1-\delta}{1-\delta^{1/3}}   \left(1-\frac{|X||Y|}{(n-|X|)(n-|Y|)}\right)   \mathbb{E}[Z] \\
& \leq \frac{1-\delta}{1-\delta^{1/3}}   \left(1-\left(\frac{\varepsilon}{10}\right)^2\right)   \mathbb{E}[Z]
\leq \left(1 - \frac{\varepsilon^2}{200} \right)   \mathbb{E}[Z].
\end{split}
\end{align*}

The last inequality holds for large enough $n$. Therefore,
\[
\Pr \left[ Z <  (1-\delta) k p (n-|X|-|Y|) \right] \leq \Pr \left[ Z< \left( 1 - \frac{\varepsilon^2}{200} \right)   \mathbb{E}[Z] \right].
\]

By Chernoff's inequality, this is at most
\[
\exp\left( - \frac{1}{2} \left( \frac{\varepsilon^2}{200} \right)^2 \E [Z] \right) =
\exp\left( - \Omega \left( n \log n \right) \right).
\]

As there are less than $4^n$ possible pairs $X,Y$, we can apply a union bound. We conclude that the probability that a pair $X,Y$ considered in this case is bad is at most $\exp \left( - \Omega (n \log n) \right)$.

\item[] \textbf{Case 2:} $Y$ is the empty set. In this case, $Z_{X,Y}$ is the total number of edges with an endpoint in $U \setminus X$, and so it suffices to show that with sufficiently high probability, all degrees are at least $L$.
Indeed, the expected degree of any fixed vertex $v$ is $kp$, so by Chernoff's inequality,
\[
 \Pr \left[ \deg1_H(v) < (1- \delta) kp \right] \leq \exp\left( - \delta^2 k p / 2 \right) \leq n^{ - \omegaone }. 
\]
Therefore, a union bound on all $2n$ vertices implies that the probability that there is a vertex whose degree is less than $L$ is $n^{-\omegaone}$.

\item[] \textbf{Case 3:} Assume now that $t \coloneqq |Y| < \frac{\varepsilon}{10} n$ and $t>0$. Since $|X|+|Y| < n$, we have $t<s:= n - |X|$.

If $s \geq \varepsilon n$, then as the number of edges from $U \setminus X$ to $Y$ is at most $kt$, we have 
\[
E_G(U \setminus X,V \setminus Y) \geq sk-kt = k(s-t).
\]
Therefore, $\mathbb{E}[Z] \geq k(s-t)p \geq s \frac{k p}{2}$.
We want 
\[
Z\geq (1-\delta) kp (n-|X|-|Y|) =  (1-\delta) kp (s-t) \leq (1-\delta)\mathbb{E}[Z].
\]

On the other hand, if $s < \varepsilon n$ then the fact that the number of edges from $U \setminus X$ to $Y$ is at most $st$ implies that

\[
E_G(U \setminus X,V \setminus Y) \geq sk-st = s(k-t).
\]
Therefore, $\mathbb{E}[Z] \geq s(k-t)p \geq s \frac{k p}{2}$.
We want 
\[
Z\geq (1-\delta) kp (n-|X|-|Y|) =  (1-\delta) kp (s-t) \leq
\]
\[
\left(\frac{(1-\delta) kp (s-t)}{s(k-t)p}\right) \mathbb{E}[Z] =
\]
\[
(1-\delta)\frac{1-(t/s)}{1-(t/k)}\mathbb{E}[Z] \leq (1-\delta)\mathbb{E}[Z].
\]

In either case, by Chernoff's inequality, we have
\[
\Pr \left[ Z < (1- \delta) kp (n-|X|-|Y|)\right] \leq
\exp \left( -\delta^2 s \frac{k p}{4} \right) \leq \left( n^{-\omegaone} \right)^s.
\]

We now apply a union bound over all such pairs $X,Y$. Note that $s > t \geq 1$, so the probability that one of the pairs considered in the last two cases is bad is at most
\[
2 \sum_{s=2}^n\binom{n}{s} \sum_{t=1}^{\min(s-1, (\varepsilon/10)n)} \binom{n}{t} \left( n^{- \omegaone } \right)^s \leq 
n^{-\omegaone}.
\]
\end{itemize}
\end{proof}

To prove Lemma \ref{lem:Matthew's_method} we will need the following upper bound on the number of regular bipartite graphs that are not pseudorandom.

\begin{lem}\label{lem:bad graphs bound}
	Let $\varepsilon n \leq k \leq n$. The number of $k$-regular bipartite graphs on $2n$ vertices that are not $\delta^{1/3}$-pseudorandom is bounded from above by:
	\[
	\binom{n}{k}^n \exp \left( - \Omega \left( \delta^{2/3} n^2 \right) \right).
	\]
\end{lem}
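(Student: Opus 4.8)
The plan is to bound the number of "bad" $k$-regular bipartite graphs by a union bound over the choice of a witnessing pair $(X,Y)$. Fix a graph $G = \langle U \cup V, E\rangle$ that is not $\delta^{1/3}$-pseudorandom; then there is a pair $X \subseteq U$, $Y \subseteq V$ with $|X|, |Y| \geq \tfrac{\varepsilon}{10} n$ and $E_G(X,Y) < (1 - \delta^{1/3}) |X||Y| \tfrac{k}{n}$. There are at most $4^n$ choices for such a pair, which is a benign factor to be absorbed later, so it suffices to bound, for each fixed pair $(X,Y)$ with $|X| = a \geq \tfrac{\varepsilon}{10}n$ and $|Y| = b \geq \tfrac{\varepsilon}{10}n$, the number of $k$-regular bipartite graphs $G$ with $E_G(X,Y) < (1-\delta^{1/3}) ab \tfrac{k}{n}$.

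To count these, I would build such a graph $G$ by choosing, independently for each vertex $u \in U$, its neighborhood $N(u) \in \binom{V}{k}$ — this over-counts honest $k$-regular bipartite graphs (we don't yet enforce that each $v \in V$ has degree exactly $k$), but that is fine for an upper bound, and it gives the clean prefactor $\binom{n}{k}^n$. The key point is a large-deviation estimate: if each $N(u)$ is a uniformly random $k$-subset of $V$, then $E_G(X,Y) = \sum_{u \in X} |N(u) \cap Y|$ is a sum of $a$ independent hypergeometric random variables, each with mean $b k / n$, so $\E[E_G(X,Y)] = ab k/n$. A Chernoff/Hoeffding-type bound for such a sum (hypergeometric variables are concentrated at least as well as binomials — one can invoke Hoeffding's reduction, or simply note each $|N(u)\cap Y|$ is a sum of $k$ negatively associated indicators) gives
\[
\Pr\!\left[ E_G(X,Y) < (1-\delta^{1/3}) ab\tfrac{k}{n} \right] \leq \exp\!\left( -\tfrac{1}{2}\,\delta^{2/3}\, ab\tfrac{k}{n} \right) = \exp\!\left( -\Omega(\delta^{2/3} n^2) \right),
\]
using $a, b \geq \tfrac{\varepsilon}{10}n$ and $k \geq \varepsilon n$. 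Multiplying this probability by the total count $\binom{n}{k}^n$ of graphs generated by the product model bounds the number of graphs with a witness at this particular $(X,Y)$, and summing over the $\le 4^n$ pairs $(X,Y)$ leaves the bound $\binom{n}{k}^n \exp(-\Omega(\delta^{2/3} n^2))$, since $4^n$ is swallowed by the exponential term as $\delta^{2/3} n^2 = \omega(n)$ (recall $\delta \geq n^{-1}$, so $\delta^{2/3} n^2 \geq n^{4/3}$).

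The main obstacle is making the concentration step rigorous in the product model: the neighborhoods $N(u)$ are chosen from $\binom{V}{k}$ rather than as independent coin flips, so $E_G(X,Y)$ is a sum of independent hypergeometric variables rather than a genuine binomial, and one must either cite a Chernoff bound valid for the hypergeometric distribution (e.g.\ via Hoeffding, or via negative association of the indicator variables within each $N(u)$) or slightly adjust the model. A secondary subtlety is verifying that the product model indeed over-counts $k$-regular bipartite graphs on $2n$ vertices — this is immediate, since every such graph arises (exactly once) as a choice of the $U$-side neighborhoods, and the product model simply includes additional non-$k$-regular graphs; hence the count $\binom{n}{k}^n$ is a valid, if lossy, upper bound, and losing a factor here is harmless because the exponential savings $\exp(-\Omega(\delta^{2/3} n^2))$ dominate. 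Everything else is a routine union bound.
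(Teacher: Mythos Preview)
Your argument is correct, and it differs from the paper's in a genuinely useful way. The paper works in the binomial model $G(n,n;k/n)$, where edges are independent Bernoulli$(k/n)$ variables; it bounds the probability of the bad event $\mathcal{B}$ there by Chernoff plus a union bound, and then \emph{conditions} on the event $\mathcal{C}$ that the graph is exactly $k$-regular, paying a factor $1/\Pr[\mathcal{C}]$. To control that factor it must invoke an external lower bound on the number of $k$-regular bipartite graphs (e.g.\ \cite{ordentlich2000two}), obtaining $\Pr[\mathcal{C}] \geq \exp(-O(n\log n))$, which is then absorbed since $\delta^{2/3}n^2 = \omega(n\log n)$. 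Your route instead uses the ``left-regular'' product model in which each $u\in U$ independently picks $N(u)\in\binom{V}{k}$; this model contains every $k$-regular bipartite graph exactly once, so no conditioning step (and no external enumeration result) is needed. The price you pay is that the edge indicators within a single $N(u)$ are not independent, so your concentration step requires a Chernoff-type bound for sums of hypergeometric variables---but as you note, this is standard via Hoeffding's comparison or negative association, and the resulting tail bound is at least as strong as the Bernoulli one. In short: the paper trades a cleaner concentration step for an external citation, while your approach is self-contained at the cost of invoking a (routine) hypergeometric Chernoff bound.
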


\begin{proof}
	Let $R \sim G(n,n;k/n)$ be a balanced, bipartite, binomial random graph with $2n$ vertices, vertex partition $U \cup V$, and edge probability $k/n$. Let $\mB$ be the event that for some $X \subseteq U$, $Y \subseteq V$ satisfying $|X|,|Y| \geq \varepsilon n / 10$, the number of edges between $X$ and $Y$ satisfies $E_R(X,Y) \leq \left( 1 - \delta^{1/3} \right) |X||Y|k/n$. As $E_R(X,Y)$ is distributed binomially with parameters $|X||Y|, k/n$, by Chernoff's inequality and a union bound over all such pairs $X,Y$:
	\[
	\Prob \left[ \mB \right] \leq 4^n \exp \left(- \frac{\delta^{2/3} \varepsilon^3}{200}n^2 \right) =
    \exp \left( - \Omega \left( \delta^{2/3} n^2 \right) \right).
	\]
	On the other hand, let $\mC$ be the event that $R$ is $k$-regular.  By various estimates (e.g.\ \cite[Proposition 2.2]{ordentlich2000two}), the number of $k$-regular bipartite graphs on $2n$ vertices is at least $\binom{n}{k}^{2n} \left( \frac{k}{n} \right)^{kn} \left( 1 - \frac{k}{n} \right)^{n(n-k)}$. As $k$-regular graphs have precisely $kn$ edges:
	\begin{align*}
	\Prob \left[ \mC \right] & \geq \binom{n}{k}^{2n} \left( \frac{k}{n} \right)^{2kn} \left( 1 - \frac{k}{n} \right)^{2n(n-k)} \\
    & = \left( \frac{n!}{k!(n-k)!} \left( \frac{k}{n} \right)^{k} \left( \frac{n-k}{n} \right)^{n-k} \right)^{2n} = \exp \left( - O \left( n \log n \right) \right),
	\end{align*}
    where the final equality follows from Stirling's approximation. Recall that $\delta \geq 1/n$, and so $\delta^{2/3} n^2 = \omega \left( n \log n \right)$. Therefore,
   \[
	\Prob \left[ \mB \given \mC \right] \leq \frac{\Prob \left[ \mB \right]}{\Prob \left[ \mC \right]} \leq \exp \left( - \Omega \left( \delta^{2/3} n^2 \right) \right).
	\]
    Observe that conditioning on $\mC$ gives the uniform distribution on $k$-regular bipartite graphs with $2n$ vertices. As the number of $k$-regular bipartite graphs with $2n$ vertices is bounded from above by $\binom{n}{k}^n$, the lemma follows.
\end{proof}

\begin{proof}[Proof of Lemma \ref{lem:Matthew's_method}]
	The proof is by induction on $i$. Recall that for $1 \leq i \leq m$, $A_i$ denotes the event that $G_i$ is not $\delta^{1/3}$-pseudorandom, $B_i$ is the event that $H_i$ contains fewer than $L(i)^n \frac{n!}{n^n}$ perfect matchings, and $C_i = \cup_{j < i} B_j$. We want to show that for all $1< i \leq m+1$, $\Pr [A_i | \overline{C_i}] = \exp \left( - \Omega (n) \right)$.

	Recall that $k(i)=n-i+1$, and thus $G_i$ is a $k(i)$-regular graph. For $1 \leq j < (1-\varepsilon)n$ define the error function $\alpha(j) = \sum_{s=1}^{j-1} \frac{\log(s)+2}{4s}$. Note that for all $j$, $\alpha(j) \leq \log^2(n)$.
	We will show that the following conditions hold for every $1 \leq j < (1- \varepsilon)n$:
	\begin{enumerate}
		\item For every graph $G$ it holds that
		\[
		\Pr[G_{j} = G \given \overline{C_j}] \leq \binom{n}{k(j)}^{-n} e^{2n ( \delta j + \alpha(j))}.
		\]

		\item $\Pr[ A_j \given \overline{C_j}] = \exp \left( - \Omega \left( n \right) \right)$.

		\item $\Pr[C_{j+1} \given \overline{C_j}] = n^{-\omegaone}$.
	\end{enumerate}
	Observe that $\Prob \left[ \overline{C_1} \right] = 1$. When $j=1$, the first condition holds trivially. The second condition follows from the fact that $G_1 = K_{n,n}$. The third condition follows from Lemma \ref{lem:typicality}. Assume inductively that for $2 \leq i < (1 - \varepsilon )n$, the conditions hold for $j = i - 1$. We will show that they hold for $j=i$. This suffices to prove the lemma.
	
	Let $k=k(i)$. For a bipartite graph $G$ let $\overline{G}$ denote its complement, i.e., the bipartite graph on the same vertex set with all bipartite edges not in $G$. Let $M( \overline{G})$ be the set of perfect matchings in $\overline{G}$.

	\begin{enumerate}
		\item Let $G$ be a $k$-regular bipartite graph. Let $\mu$ be the perfect matching chosen by the algorithm at step $(i-1)$. We apply the law of total probability to the choice of $\mu$. We have $G_i = G$ only if $\mu = \nu$ for some $\nu \in M(\overline{G})$ and $G_{i-1}$ is equal to the union of $G_i$ and $\nu$. Thus:
		\begin{align} \label{eqn:matching_union}
		\Pr[G_i=G \given \overline{C_i}] = \sum_{\nu \in M(\overline{G})} {\Pr[\mu = \nu| G_{i-1}=G \cup \nu, \overline{C_i} ] \Pr [G_{i-1}=G \cup \nu| \overline{C_i} ]}.
		\end{align}
		We bound the probabilities in the sum separately. Once again applying the law of total probability, while observing that conditioning on $\nu \notin M(H_{i-1})$ implies that $\mu \neq \nu$:
		\begin{align*}
			\Pr[ \mu & = \nu \given G_{i-1} = G \cup \nu, \overline{C_i} ] = \\
			& \Prob \left[ \mu = \nu \given \nu \in M(H_{i-1}), G_{i-1} = G \cup \nu, \overline{C_i} \right] \Prob \left[ \nu \in M(H_{i-1}) \given G_{i-1} = G \cup \nu, \overline{C_i} \right].
		\end{align*}
		The event $\overline{C_i}$ implies that $H_{i-1}$ contains at least $L(i-1)^n \frac{n!}{n^n}$ perfect matchings. As $\mu$ is chosen uniformly at random from $M(H_{i-1})$:
		\[
		\Prob \left[ \mu = \nu \given \nu \in M(H_{i-1}), G_{i-1} = G \cup \nu, \overline{C_i} \right] \leq \frac{n^n}{L(i-1)^n n!}.
		\]
		In order to bound the probability that $\nu \in H(M_{i-1})$, we note that every perfect matching contains $n$ edges, and $H_{i-1}$ is a random subgraph of $G_{i-1}$ in which each edge survives with probability $p$. This would suggest a probability of $p^n$. However, we must be careful not to condition on properties of $H_{i-1}$ itself. With this in mind, we replace the conditioning on $\overline{C_i}$ with conditioning on $\overline{C_{i-1}}$, and use the induction hypothesis to obtain:
		\begin{align*}
		\Prob \left[ \nu \in M(H_{i-1}) \given G_{i-1} = G \cup \nu, \overline{C_i} \right]
        &\leq \frac { \Prob \left[ \nu \in M(H_{i-1}) \given G_{i-1} = G \cup \nu, \overline{C_{i-1}} \right]}{\Prob \left[ \overline{C_i} \given \overline{C_{i-1}} \right]} \\
        &\leq \left( 1 + \oone \right) p^n.
		\end{align*}
		Therefore:
		\begin{align}\label{eq:nu}
			\Pr[ \mu & = \nu \given G_{i-1} = G \cup \nu, \overline{C_i} ] \leq \left( 1 + \oone \right) \frac{n^n}{L(i-1)^n n!} p^n.
		\end{align}
		Using the induction hypothesis, we bound the second probability in inequality \eqref{eqn:matching_union} as follows:
		\begin{align}\label{eq:g_i-1}
			\Prob \left[ G_{i-1} = G \cup \nu \given \overline{C_i} \right] \leq
			\frac{\Prob \left[ G_{i-1} = G \cup \nu \given \overline{C_{i-1}} \right]}{\Prob \left[ \overline{C_i} \given \overline{C_{i-1}} \right]} \leq
			\left( 1 + \oone \right) \binom{n}{k + 1}^{-n} e^{2n \left( \delta (i - 1) + \alpha (i-1) \right)}.
		\end{align}
		Together, \eqref{eqn:matching_union}, \eqref{eq:nu}, and \eqref{eq:g_i-1} imply:
		\[
		\Pr[G_i=G \given \overline{C_i}] \leq \left( 1 + \oone \right) \left| M \left( \overline{G} \right) \right| \frac{n^n p^n}{L(i-1)^n n!} \binom{n}{k+1}^{-n} e^{2n \left( \delta (i - 1) + \alpha (i-1) \right)}.
		\]
		Finally, we bound $\left| M \left( \overline{G} \right) \right|$ by using Br\'egman's permanent inequality \cite{bregman1973some}. It implies that the number of perfect matchings in a $d$-regular bipartite graph on $2n$ vertices is at most $\left( d! \right)^{n/d}$. Since $\overline{G}$ is an $(i-1)$-regular bipartite graph on $2n$ vertices, we have $\left| M \left( \overline{G} \right) \right| \leq \left( \left( i-1 \right)! \right)^{n/(i-1)}$. Therefore, using the inequality $\sqrt{2 \pi \ell} (\ell/e)^\ell \leq \ell! \leq e\sqrt{\ell} (\ell/e)^\ell$, which holds for all natural $\ell$:
		\begin{align*}
			\Pr[G_i=G \given \overline{C_i}] & \leq \left( 1 + \oone \right) \left( \left( i-1 \right)! \right)^{n/(i-1)} \frac{n^n p^n}{L(i-1)^n n!} \binom{n}{k+1}^{-n} e^{2n \left( \delta (i - 1) + \alpha (i-1) \right)} \\
			& \leq \left( 1 + \oone \right) \left( e \sqrt{i-1} \right)^{n/(i-1)} \left( \frac{n}{e} \right)^n \frac{1}{n! (1-\delta)^n} \binom{n}{k}^{-n} e^{2n \left( \delta (i - 1) + \alpha (i-1) \right)} \\
            & \leq \binom{n}{k}^{-n} e^{2n \left( \delta i + \alpha (i) \right)}
		\end{align*}
		as desired.

		\item Let $\mG$ be the set of $k$-regular graphs on $2n$ vertices that are not $\delta^{1/3}$-pseudorandom. Then:
		\[
		\Prob \left[ A_i \given \overline{C_i} \right] = \sum_{G \in \mG} \Prob \left[ G_i = G \given \overline{C_i} \right].
		\]
		We have already shown that for any $G$ it holds that $\Prob \left[ G_i = G \given \overline{C_i} \right] \leq \binom{n}{k}^{-n} e^{2 n (\delta i + \alpha(i))}$. Furthermore, by Lemma \ref{lem:bad graphs bound}: $\left| \mG \right| \leq \binom{n}{k}^n \exp \left( - \Omega \left( \delta^{2/3} n^2 \right) \right)$.

		Therefore:
		\[
		\Prob \left[ A_i \given \overline{C_i} \right] \leq \exp \left( 2 \delta n^2 - \Omega \left( \delta^{2/3} n^2 \right) + n \log^2(n) \right) = \exp \left( - \Omega (n) \right).
		\]

		\item We have:
		\[
		\Prob \left[ C_{i+1} \given \overline{C_i} \right]
		\leq \Prob \left[ C_{i+1} \given \overline{C_i}, \overline{A_i} \right] \Prob\left[ \overline{A_i} \given \overline{C_i} \right] + \Prob \left[ C_{i+1} \given \overline{C_i}, A_i \right] \Prob \left[ A_i \given \overline{C_i} \right].
		\]
		We have already shown that $\Prob \left[ A_i \given \overline{C_i} \right] = n^{-\omegaone}$. Furthermore, Lemma \ref{lem:typicality} implies that $\Prob \left[ C_{i+1} \given \overline{C_i}, \overline{A_i} \right] =n^{-\omegaone}$. Therefore:
		\[
		\Prob \left[ C_{i+1} \given \overline{C_i} \right] = n^{-\omegaone}.
		\]
	
	\end{enumerate}

\end{proof}

\section{Proof of Theorem \ref{thm:list_coloring}}\label{sec:boxes}

\newcommand{\rc}{{\left(r,c\right)}}
\newcommand{\Mnm}{{M_{n,n,m}}}

Let $M_{n,m,k}$ be the set of all $n \times m \times k$ 0-1 arrays. For ${M \in M_{n,m,k}}$ we denote by $\left|M\right|$ the number of $1$s in $M$.

\begin{definition}
	For integers $n,m \in \N$, an \termdefine{$\left( n,n,m \right)$-array process} is a sequence $\left\{ M_i \right\}_{i=0}^{n^2m} \subseteq \Mnm,$ where $M_0$ is the all $0$s array, and $M_{i+1}$ is obtained from $M_i$ by changing a single $0$ to $1$.
\end{definition}

We denote a generic $\left( n,n,m \right)$-array process by $ \tilde{M} = \left\{ M_i \right\}_{i=0}^{n^2 m} $ and write $\AP{n}{n}{m}$ for the uniform distribution on such processes.

\begin{definition}
	Let $Q$ be a non-trivial monotone increasing property of $\Mnm$, and let $\tilde{M}$ be an array process. The \termdefine{hitting time} of $Q$ w.r.t.\ $\tilde{M}$ is defined as:
	\[
	\tau \left(\tilde{M}; Q \right) = \min \left\{ t : M_t \text{ has } Q \right\}.
	\]
\end{definition}

We are interested in the hitting time for the property that $\tilde{M} \sim \AP{n}{n}{m}$, where $m \geq n$, supports a Latin box.

Let $M \in \Mnm$. For $1 \leq r,c \leq n$ we refer to a line of the form $\left(r,c, \cdot \right)$ as a \termdefine{shaft}. The shaft is \termdefine{empty} if $M \left( r,c,1 \right) = M \left( r,c,2 \right) = \ldots = M \left( r,c,m \right) = 0$. Clearly, a necessary condition for $M$ to support a Latin box is that it have no empty shafts. We show that for $m$ slightly larger than $n$ this is asymptotically almost surely a sufficient condition.

\begin{thm}\label{thm:hitting_time}
	For every $\varepsilon>0$, if $\tilde{M} \sim \AP{n}{n}{\left( 1 + \varepsilon \right) n}$ then asymptotically almost surely:
	\[
	\tau \left( \tilde{M} \text{ has no empty shafts} \right) = \tau \left( \tilde{M} \text{ supports a Latin box} \right).
	\]
\end{thm}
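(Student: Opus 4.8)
The plan is to show that with high probability, at the moment $\tau_0 := \tau(\tilde M \text{ has no empty shafts})$ the array $M_{\tau_0}$ already supports a Latin box; since having no empty shafts is necessary and monotone, this forces the two hitting times to coincide. The first step is to locate $\tau_0$. The number of empty shafts after $t$ steps is, up to negligible corrections, governed by a coupon-collector-type process on the $n^2$ shafts, each of which needs to be "hit" among the $(1+\varepsilon)n$ cells it contains. A standard second-moment / Poissonization argument shows that $\tau_0 = \left( 1 \pm o(1) \right) \frac{n^2 (1+\varepsilon) n \log n}{(1+\varepsilon) n} = (1 \pm o(1)) n^2 \log n$ in terms of step count, which translates (by the usual correspondence between the array process and the binomial model $\mnp{n}{n}{(1+\varepsilon)n}{p}$, e.g.\ via a two-round exposure / sandwiching argument) to a density $p_0 = \frac{2\log n}{(1+\varepsilon)n}$, matching the claimed sharp threshold. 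It is convenient to work slightly above $\tau_0$: fix a small $\eta>0$, let $t_1 = (1+\eta)\tau_0$, and show that (i) w.h.p.\ $M_{\tau_0}$ and $M_{t_1}$ differ in only $O(\eta n^2 \log n)$ cells, and that in fact w.h.p.\ there are no empty shafts already at some time $\le \tau_0$ — this is automatic — while (ii) w.h.p.\ $M_{t_1}$ supports a Latin box. A short sandwiching argument then shows that w.h.p.\ the Latin box can be found already at time $\tau_0$: one exposes the cells in a random order, and uses that the last $\eta$-fraction of cells added in $[\tau_0, t_1]$ is not needed.

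The core of the argument is therefore step (ii): if $p = \frac{2(1+\eta)\log n}{(1+\varepsilon)n} = \omega(\log n / n)$, then w.h.p.\ $M \sim \mnp{n}{n}{(1+\varepsilon)n}{p}$ with no empty shafts supports a Latin box. Here I would reuse the machinery of Section~\ref{sec:rectangle}. Recall that a Latin box in an $n\times n\times (1+\varepsilon)n$ array is a choice, for each of the $n$ shafts, of a color in $[(1+\varepsilon)n]$, such that the resulting $n \times n$ array is a Latin square on a subset of the color set — equivalently, it is a system of $(1+\varepsilon)n$ disjoint partial permutation matrices, one per color-plane, covering all $n^2$ cells. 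Dually, one can process the $n$ rows one at a time, at step $i$ choosing a partial injection from the $n$ columns to the $(1+\varepsilon)n$ colors avoiding, in each column, the colors already used in that column, and avoiding in each color-plane the cells already used — this is exactly a perfect matching in a bipartite "availability" graph, intersected with the random set $M_i$. After $i-1$ rows are placed, each column has $n - (i-1)$ available colors and each color has been used $\le \lceil n/((1+\varepsilon)n) \cdot \text{something}\rceil$ times, so the availability graph is (close to) biregular of degree $\ge \varepsilon n$; one then wants a perfect matching in its intersection with $M$. The crucial point is that the relevant degree stays $\Omega(n)$ throughout (this is where $k = (1+\varepsilon)n$ rather than $k=n$ matters — in the Latin square case the degree degrades to $o(n)$ for the last rows), so the Goel–Kapralov–Khanna theorem (as quoted, with failure probability $o(1/n)$), or the pseudorandomness-based Lemmas~\ref{lem:typicality} and~\ref{lem:Matthew's_method}, apply at every step with room to spare. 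I would carry this out via the pseudorandomness route to keep the paper self-contained: show inductively that the availability graphs are $\delta^{1/3}$-pseudorandom (Lemma~\ref{lem:Matthew's_method}-style, now easier because we do not need a matching count, only existence), then apply Lemma~\ref{lem:typicality} (or just Hall's condition) to extract a perfect matching in the random subgraph at each of the $n$ steps, with a union bound over the $n$ steps absorbed by the $n^{-\omega(1)}$ failure probabilities.

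Two technical points need care. First, the "no empty shaft" conditioning in step (ii): conditioning a product measure on the event that every shaft is nonempty introduces a mild positive correlation, but since the conditioning event has probability bounded below by a constant (it is at the threshold, so this needs the slightly-above-threshold choice $p = (1+\eta)p_0$ to push $\Pr[\text{no empty shaft}] \to 1$), one can simply write $\Pr[\text{no Latin box} \mid \text{no empty shaft}] \le \Pr[\text{no Latin box}]/\Pr[\text{no empty shaft}] = o(1)/(1-o(1))$ and be done — the conditioning is then harmless. Second, the translation between the hitting-time/array-process model and the binomial model: here one uses the standard coupling whereby $\mnp{n}{n}{m}{p_-} \subseteq M_t \subseteq \mnp{n}{n}{m}{p_+}$ for suitable $p_\pm = t/(n^2 m) \pm o(\cdot)$ with probability $1-o(1)$, together with monotonicity of both properties, to pass estimates back and forth. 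The main obstacle, I expect, is not any single deep step but the bookkeeping in the sandwiching: making precise that the Latin box found at time $t_1$ can be "pulled back" to time $\tau_0$, i.e.\ that w.h.p.\ none of the $\eta$-fraction of cells added in $(\tau_0, t_1]$ is essential. The clean way around this is to avoid pull-back entirely: show directly that at time $\tau_0$ the availability graphs are still pseudorandom (the degree there is $\Omega(n)$, unaffected by a lower-order change in total density), so step (ii)'s argument applies verbatim at $p_0$ itself with no empty shafts, once one checks $\Pr[\text{no empty shaft at } \tau_0] \to 1$, which is exactly the content of $\tau_0$ being a sharp hitting time. That reduces the whole theorem to: (a) sharp concentration of the empty-shaft-elimination time, and (b) the pseudorandom-matching argument run at the critical density — both of which are in reach of the tools already developed.
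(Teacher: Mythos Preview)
Your row-by-row strategy has a first-moment obstruction that the Section~\ref{sec:rectangle} machinery cannot absorb. Consider whichever row is processed last. At that point each column has been assigned $n-1$ colours, so its degree in the availability graph is exactly $(1+\varepsilon)n - (n-1) = \varepsilon n + 1$. The edges you may actually use in this row are those of $M(n,\cdot,\cdot)$ lying in the availability graph, so each column has a binomially distributed number of usable colours with parameters $\varepsilon n + 1$ and $p$. At the hitting-time density $p = \tfrac{2\log n}{(1+\varepsilon)n}$ (and equally at $(1+\eta)$ times this), a fixed column is isolated with probability $(1-p)^{\varepsilon n + 1} = n^{-2\varepsilon/(1+\varepsilon)+o(1)}$, so the expected number of isolated columns in the last row is $n^{(1-\varepsilon)/(1+\varepsilon)+o(1)} \to \infty$ whenever $\varepsilon < 1$. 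Thus with high probability the final step has no column-saturating matching at all, irrespective of how pseudorandom the availability graph is; this is why neither Goel--Kapralov--Khanna (which needs $p \geq C\log n /(\varepsilon^2 n)$ when the host degree is $\varepsilon n$) nor Lemma~\ref{lem:typicality} (which needs $p = \omega(\log n/n)$) applies ``with room to spare'' at the critical density. Your scheme therefore yields at best a coarse threshold of the form $p = C(\varepsilon)\log n/n$, not the sharp hitting-time statement.

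The paper's proof is organised around exactly this difficulty and does not process rows. The hitting-time claim is first reduced, via the green/blue sprinkling model $\geqnp$ (Proposition~\ref{pr:green_blue}), to finding a Latin box in a binomial array at density slightly \emph{below} threshold with one planted entry in each empty shaft. The box is then built in stages: the positions whose degree into the surplus colours $\{n+1,\ldots,m\}$ is below $\tfrac{\varepsilon}{1+\varepsilon}\log n$ are covered first by an ad hoc argument (Lemma~\ref{lem:low_deg}); a random greedy triangle-packing using only the colours in $[n]$ then covers all but $o(n)$ positions per row and column (Lemma~\ref{lem:packing}); and the remaining positions, each now guaranteed $\Omega(\log n)$ available surplus colours against only $o(n)$ forbidden ones, are completed greedily from $\{n+1,\ldots,m\}$. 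The extra $\varepsilon n$ colours are thus held in reserve for a sparse completion step rather than being mixed into a row-by-row matching where the late rows would be starved.
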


Theorem \ref{thm:list_coloring} follows from a standard coupling between random processes and their binomial counterparts.

\begin{proof}[Proof of Theorem \ref{thm:list_coloring}]
	\newcommand{\thresh}{{\frac{2}{1 + \varepsilon} \frac{\log n}{n}}}
    
	Let $\varepsilon > 0$ and let $M \sim \mnp{n}{n}{(1+\varepsilon)n}{p}$. For a fixed pair $r$ and $c$, the probability that $M(r,c,\cdot)$ is empty is $(1-p)^{(1+\varepsilon)n}$. The different shafts are independent, and so the probability that there are no empty shafts is 
    \[
    q(p) = (1-(1-p)^{(1+\varepsilon)n})^{n^2}.
    \]
 If, for some $\delta>0$, $p \leq (1-\delta) \thresh$, then $q(p) \rightarrow 0$, and therefore w.h.p.\ $M$ contains empty shafts. In this case $M$ does not support a Latin box.

On the other hand, if $p \geq (1+\delta) \thresh$, then $q(p)\rightarrow 1$, and so w.h.p.\ $M$ contains no empty shafts. Consider the following random process. For each triple $r,c,v$ of indices, choose a real number $\alpha_{r,c,v} \sim U[0,1]$ uniformly at random from the interval $[0,1]$, all choices independent. Now, $M$ is identically distributed to the array $M'$ in which all entries with $\alpha_{r,c,v}<p$ are set to $1$, and all other entries are $0$. Furthermore, let $\tilde{N'}$ be the array process obtained by flipping the entries of the all zeros array to $1$ in ascending order of $\alpha$. Note that $\tilde{N'}$ is a uniformly random array process.

Let $t = |M'|$. Observe that $M' = N'_t$, and therefore, w.h.p.\ $N'_t$ contains no empty shafts. Thus, by Theorem \ref{thm:hitting_time}, w.h.p.\ $N'_t$ contains a Latin box, which implies that $M'$ contains a Latin box. Since $M$ and $M'$ are identically distributed, w.h.p.\ $M$ contains a Latin box. 
\end{proof}

Henceforth, fix $\varepsilon > 0$ and $m = \left(1 + \varepsilon\right)n$. Wherever necessary we assume that $n$ is arbitrarily large and $\varepsilon$ is arbitrarily small.

To prove Theorem \ref{thm:hitting_time}, we introduce a new model for random arrays, denoted $\geqnp$, whose sample space consists of $n \times n \times m$ $\left(0,1\right)$-arrays where each $1$ is colored either green or blue. The green values are an array $M_G \sim \mnp{n}{n}{m}{p}$. Then, from each empty shaft in $M_G$, a position is chosen uniformly at random (all choices independent), changed to $1$, and colored blue. Denote by $M_B$ the array of blue values, and set $M = M_G + M_B$. The next proposition shows that it is enough to prove that w.h.p. $M \sim \geqnp$ supports a Latin box, for a suitable choice of $p$.

\begin{pr}\label{pr:green_blue}
	Let $Q$ be a monotone property of $\Mnm$ implying that there are no empty shafts. Let $p = \frac{2}{1 + \varepsilon} \frac{\log n - \log \log n}{n}$. If $Q$ holds w.h.p.\ for $M \sim \geqnp$, then for almost every $\tilde{M} \sim \AP{n}{n}{m}$:
	\[
	\tau \left(\tilde{M} ; Q \right) = \tau \left( \tilde{M} \text{ has no empty shafts} \right).
	\]
\end{pr}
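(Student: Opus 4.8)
The plan is to couple a uniformly random array process $\tilde M \sim \AP{n}{n}{m}$ with the $\geqnp$ model at the critical time $\tau_0 := \tau(\tilde M \text{ has no empty shafts})$, and to show that at that moment the array $M_{\tau_0}$ stochastically dominates (a coupled copy of) an $M \sim \geqnp$ for the stated $p$. Since $Q$ is monotone increasing and holds w.h.p.\ for the latter, it then holds w.h.p.\ for $M_{\tau_0}$; and since $Q$ implies ``no empty shafts'', $Q$ cannot hold before time $\tau_0$, giving $\tau(\tilde M;Q) = \tau_0$ almost surely.

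The key steps, in order, are as follows. First I would establish a concentration estimate for $\tau_0$: using the second-moment/Poisson-approximation heuristic already invoked in the paper, the number of empty shafts after $t$ steps behaves like $n^2(1-t/(n^2m))^{m}$ up to lower-order fluctuations, and the hitting time $\tau_0$ is w.h.p.\ $(1+o(1))\,p_1 n^2 m$ where $p_1 := \frac{2}{1+\varepsilon}\frac{\log n}{n}$; more precisely I would show $\tau_0 \le p_1 n^2 m$ (equivalently, that at ``density'' $p_1$ there are already no empty shafts) w.h.p., and also a matching lower bound so that $\tau_0$ is well inside the window. Second, I would use the standard coupling between the array process and binomial arrays: conditioning $\tilde M$ to have exactly $t$ ones, $M_t$ is a uniformly random $t$-subset of positions, which for $t = \lfloor p' n^2 m\rfloor$ is sandwiched between $\mnp{n}{n}{m}{p''}$ and $\mnp{n}{n}{m}{p'''}$ for $p''$ slightly below and $p'''$ slightly above $p'$, with the discrepancy absorbed by a sprinkling argument. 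Third — the heart of the matter — I would show that $M_{\tau_0}$ dominates a copy of $M \sim \geqnp$ with $p = \frac{2}{1+\varepsilon}\frac{\log n - \log\log n}{n}$: run the process up to the slightly earlier time $t_1 := \lfloor p n^2 m\rfloor$, couple $M_{t_1}$ with the green array $M_G \sim \mnp{n}{n}{m}{p}$ (valid since $p$ is a hair below the density $t_1/(n^2m)$, up to the usual sprinkling), then observe that the additional ones added between times $t_1$ and $\tau_0$ hit every shaft that was empty at time $t_1$ — because by definition of $\tau_0$ no shaft is empty at time $\tau_0$ — and moreover the \emph{first} new one placed in each such shaft is, conditionally, close to uniform over the (then-)empty positions of that shaft. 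Those first-hits play exactly the role of the blue array $M_B$. Hence $M_{\tau_0} \succeq M_G + M_B = M$ in distribution (after a routine coupling correction for the gap $\log\log n$ in the definition of $p$, which only helps), so $Q$ holds w.h.p.\ for $M_{\tau_0}$.

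The main obstacle I anticipate is the third step, and specifically verifying that the conditional distribution of the ``first new one'' in each shaft that was empty at time $t_1$ is uniform (or dominates uniform) over that shaft's still-empty cells, \emph{uniformly over all such shafts simultaneously}. In the array-process model the positions are revealed in a uniformly random order, so within a single fixed shaft the first revealed cell among its $m$ cells is exactly uniform; the subtlety is that we are conditioning on the global event that determines $\tau_0$ (no empty shafts) and on the already-revealed green array, which introduces mild dependence between shafts. I would handle this by working at the level of the random permutation of all $n^2 m$ cells: condition on the set and order of the first $t_1$ cells (this fixes $M_G$ and which shafts are still empty), and then note that the relative order of the remaining cells is a uniform permutation, so within each still-empty shaft the earliest remaining cell is uniform and these earliest cells are negatively associated across shafts — enough to dominate the independent choices made in the $\geqnp$ model. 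The remaining ingredients (concentration of $\tau_0$, the binomial-vs-process coupling, sprinkling to bridge the $\log\log n$ and $o(1)$ density gaps) are standard and I would treat them briefly.
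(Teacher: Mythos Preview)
Your approach is correct and is at heart the same coupling as the paper's, but you are working much harder than necessary. The paper uses a single family of i.i.d.\ uniforms $\alpha_{r,c,v}\sim U[0,1]$ to generate everything simultaneously: set $M' := \{(r,c,v):\alpha_{r,c,v}<p\}$ (this is exactly $M_G$), and for each shaft empty in $M'$ place a $1$ at the cell of minimal $\alpha$ in that shaft (this is exactly $M_B$, since the argmin of i.i.d.\ uniforms is a uniformly random index, independently across shafts). The array process $\tilde N$ is obtained by sorting the same $\alpha$'s. Then the containment $\mathrm{supp}(M'+M'')\subseteq \mathrm{supp}(N_{\tau_0})$ is \emph{deterministic} on the event that $M'$ has an empty shaft: indeed $M'=N_{|M'|}$ with $|M'|<\tau_0$, and each blue cell is by construction the first cell of its shaft in the process order, hence already present at time $\tau_0$. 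The only probabilistic input is that $M'$ has empty shafts w.h.p., which holds because $p$ is just below the threshold.

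So none of your anticipated difficulties arise: no concentration of $\tau_0$ is needed (only the one-sided statement ``$M_G$ has empty shafts''), no sprinkling between the process at a fixed time $t_1$ and the binomial model, and no approximate-uniformity or negative-association argument for the first hits. Your final paragraph already contains the right observation (``within each still-empty shaft the earliest remaining cell is uniform''); once you notice that under the $\alpha$-coupling these earliest cells \emph{are} $M_B$ and the thresholded cells \emph{are} $M_G$, the whole proof collapses to a few lines.
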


Proposition \ref{pr:green_blue} is similar to analogous claims used to prove hitting time results in random graph and hypergraph processes (for example \cite[Lemma 7.9]{bollobas1998random} and \cite[ Lemma 1]{krivelevich1996perfect}).

\begin{proof}
	As in the proof of Theorem \ref{thm:list_coloring}, for each triple $r,c,v$ let $\alpha_{r,c,v} \in [0,1]$ be drawn uniformly at random and independently. Now $M_G$ is identically distributed to the array $M'$ in which all entries with $\alpha_{r,c,v} < p$ are set to $1$, and all other entries are $0$. Furthermore, $M_B$ is identically distributed to the array $M''$ in which, for each empty shaft $r,c$ in $M'$, the element with minimal $\alpha_{r,c,v}$ is set to $1$. As before, let $\tilde{N}$ be the (uniformly random) array process where elements are set to $1$ in ascending order of $\alpha$. Let $N_t$ be the first array in which there are no empty shafts. Recall that w.h.p.\ $M'$ has empty shafts. Therefore, w.h.p., $supp (M' + M'') \subseteq supp (N_t)$. Now, $M'+M'' \sim M$, w.h.p.\ $M \in Q$, and $Q$ is a monotone property. Therefore, $N_t \in Q$ w.h.p.
\end{proof}

Henceforth, let $M = M_G + M_B \sim \geqnp$, with $p$ as in the statement of Proposition \ref{pr:green_blue}. Unless stated otherwise all probabilities refer to this distribution. For $1\leq r,c \leq n$ set:
\begin{align*}
\begin{split}
& d \rc = \sum_{i=1}^{m} M\left(r,c,i\right), \\
& d_m \rc = \sum_{i=n+1}^{m}M\left(r,c,i\right).
\end{split}
\end{align*}

In what follows, we think of an $n \times n \times m$ Latin box as a function $L:[n]^2 \rightarrow [m]$ such that $L \left(a,b\right) \neq L \left(c,d\right)$ whenever $\left(a,b\right)$ and $\left(c,d\right)$ have exactly one coordinate in common. A function $B : S \rightarrow \left[m\right]$ is a \termdefine{partial Latin box} if $S \subseteq \left[n\right]^2$ and $B \left(a,b\right) \neq B \left(c,d\right)$ whenever $\left(a,b\right)$ and $\left(c,d\right)$ have exactly one coordinate in common. We call the positions in $S$ \termdefine{covered}, and those in $\left[n\right]^2 \setminus S$ \termdefine{uncovered}. $B$ is \termdefine{supported} by $M$ if for all $\rc \in S, M \left( r,c,B\rc \right) = 1$. We will occasionally use the adjective ``proper'' to distinguish a Latin box from a partial one.

Assuming Proposition \ref{pr:green_blue}, it suffices to prove that w.h.p.\ $M$ supports a Latin box. We will show that w.h.p.\ we can construct partial Latin boxes $B_1, B_2, B_3, B_4$ supported by $M$, and then show that w.h.p.\ $B_4$ can be completed to a proper Latin box $B$, also supported by $M$. The stages of the construction are roughly as follows:

\begin{itemize}
	
	\item Construct $B_1$ by covering all positions $\rc$ s.t.\ ${d \rc - d_m \rc \leq \log\log n}$ and $d_m \rc \leq \frac{\varepsilon}{1+\varepsilon}\log n$.
	\item Extend $B_1$ to $B_2$ by covering all positions $\left(r,c\right)$ for which $d_m \rc \leq \frac{\varepsilon}{1+\varepsilon}\log n$.
	\item Construct $B_3$ using only symbols from $\left[n\right]$, and covering all but $o \left(n\right)$ positions in each row and column.
	\item Combine $B_2$ and $B_3$ to construct $B_4$, in which all but $o \left(n\right)$ positions in each row and column are covered, and in addition each uncovered position $\left(r,c\right)$ satisfies ${d_m \rc \geq \frac{\varepsilon}{1+\varepsilon} \log n}$.
	\item Extend $B_4$ to a proper Latin box $B$ by covering the remaining positions with values from $\left\{n+1, \ldots, m \right\}$.
	
\end{itemize}

$B_1, B_2$, and $B$ are found via a simple randomized algorithm. To construct $B_3$, we use a random greedy algorithm. $B_4$ is constructed by ``overwriting'' $B_3$ with $B_2$, and erasing any values from $B_3$ that collide with $B_2$. We now prove that these steps can, in fact, be successfully completed w.h.p.

The following lemma constructs $B_2$. The construction of $B_1$ is an ingredient in the proof.

\begin{lemma}\label{lem:low_deg}
	W.h.p.\ $M$ supports a partial Latin box $B_2$ covering only $o \left(n\right)$ positions in each row and column s.t. if $\rc \in \left[n\right]^2$ is not covered by $B_2$ then ${d_m \rc \geq \frac{\varepsilon}{1 + \varepsilon} \log n}$.
\end{lemma}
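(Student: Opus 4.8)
\textbf{Proof plan for Lemma \ref{lem:low_deg}.}

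The plan is to build $B_2$ in two stages, following the outline given above: first cover the ``doubly deficient'' positions (those with both few large symbols and few symbols overall) to get $B_1$, then extend by covering the remaining positions with $d_m\rc$ small. The governing observation is that, with $p = \frac{2}{1+\varepsilon}\frac{\log n - \log\log n}{n}$, each $d_m\rc$ is (essentially) a binomial random variable with mean $\varepsilon n p = \frac{2\varepsilon}{1+\varepsilon}(\log n - \log\log n)$, so the threshold $\frac{\varepsilon}{1+\varepsilon}\log n$ sits at roughly half the mean. A Chernoff bound then shows $\Pr[d_m\rc \le \frac{\varepsilon}{1+\varepsilon}\log n] \le n^{-c}$ for some explicit $c = c(\varepsilon) \in (0,1)$; likewise $\Pr[d\rc - d_m\rc \le \log\log n]$ (the number of small symbols present, a binomial with mean $\approx np \sim \frac{2}{1+\varepsilon}\log n$) is $n^{-c'+o(1)}$. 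Hence the expected number of positions to be covered by $B_1$ is $n^{2-c-c'+o(1)}$, and by Markov the actual count is $o(n)$ per row and column w.h.p., provided the exponents are chosen so that $c + c' > 1$; this is where the precise constants in the definition of $B_1$ (the $\log\log n$ and $\frac{\varepsilon}{1+\varepsilon}\log n$ cutoffs, and the choice $p \sim \frac{2}{1+\varepsilon}\frac{\log n}{n}$) are calibrated. One has to be slightly careful that $d\rc$ includes the blue entry when the shaft would otherwise be empty, but a shaft is empty with probability $(1-p)^m = n^{-2+o(1)} = o(n^{-c})$, so this correction is negligible.

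Next I would describe the randomized construction of $B_1$ and its extension to $B_2$. For $B_1$: process the target positions and, for each, pick a symbol uniformly at random among those $i$ with $M(r,c,i)=1$ that do not already appear in the (partially built) same row or same column. Since the target set $S_1$ has $o(n)$ positions in each row and column while each such position has $\ge 2$ available symbols (being nonempty), a trivial greedy/union-bound argument shows a valid assignment exists; the randomization is only needed to keep control for the analysis of later stages. For the extension $B_2$: cover the additional positions with $d_m\rc \le \frac{\varepsilon}{1+\varepsilon}\log n$ using a large symbol from $\{n+1,\dots,m\}$. A position $\rc$ not already in $B_1$ but with $d_m\rc$ small must, by the complementary condition defining $B_1$, have $d\rc - d_m\rc > \log\log n$, i.e.\ many small symbols present — but those are not the ones we want to use. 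The point instead is that we assign to $\rc$ one of its (few but at least one, since it survived into $B_2$'s target set only if $d_m\rc \ge 1$ after the blue correction) large symbols; the conflict graph is sparse because the total number of such positions is $o(n)$ per line, so again a greedy assignment succeeds. The final claim — if $\rc$ is uncovered by $B_2$ then $d_m\rc \ge \frac{\varepsilon}{1+\varepsilon}\log n$ — is immediate from the construction, since every position with $d_m\rc$ below the cutoff is, by fiat, in the target set of $B_2$.

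I expect the main obstacle to be the bookkeeping of the exponents: one must verify that with the stated $p$ the two ``rare event'' probabilities for a single position multiply to something comfortably below $n^{-1}$, so that summing over the $n^2$ positions and then over $n$ lines still gives $o(n)$ per line (and in particular $o(n)$ total in the relevant sense). Concretely, $\Pr[d_m\rc \le \frac{\varepsilon}{1+\varepsilon}\log n]$ behaves like $\exp(-(\tfrac12 + o(1))\cdot \varepsilon n p) = n^{-\frac{\varepsilon}{1+\varepsilon} + o(1)}$, and $\Pr[d\rc - d_m\rc \le \log\log n] = n^{-\frac{1}{1+\varepsilon} + o(1)}$ (the small symbols form a binomial with mean $\sim \frac{2}{1+\varepsilon}\log n$, and we ask it to be at most $\log\log n = o(\log n)$, a large deviation of relative size $1-o(1)$, giving exponent $\frac{1}{1+\varepsilon}$). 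The product is $n^{-1 + o(1)}$ — exactly on the boundary — so the $\log\log n$ shading of $p$ (rather than $\log n$) is precisely what is needed to push $\mathbb{E}[|S_1|] = n^{2}\cdot n^{-1+o(1)} \cdot (\text{correction})$ down to $o(n)$ after one is careful about the $o(1)$ terms; getting this inequality to go the right way, and then transferring it from ``expected number of covered positions'' to ``$o(n)$ per row and column w.h.p.'' via Markov applied to each line separately, is the delicate part. Everything else — existence of the greedy assignments, negligibility of empty shafts — is routine.
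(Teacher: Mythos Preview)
Your two–stage outline matches the paper's, and the observation that positions with both $d_m\rc$ small and $d\rc-d_m\rc$ small are rare enough to handle first is exactly right. But your Stage~2 implementation has a genuine gap: you propose to cover each $\rc\in S\setminus T$ with a \emph{large} symbol from $\{n+1,\ldots,m\}$, arguing that such a position has $d_m\rc\ge 1$ ``after the blue correction''. This is false. Membership in $S$ means precisely that $d_m\rc<\frac{\varepsilon}{1+\varepsilon}\log n$, which includes $d_m\rc=0$; and the blue correction only forces $d\rc\ge 1$, not $d_m\rc\ge 1$ (the blue $1$ may land in $[n]$). In fact, $\Prob[d_m\rc=0]=(1-p)^{\varepsilon n}=n^{-2\varepsilon/(1+\varepsilon)+o(1)}$, so the expected number of positions with no large symbol at all is $n^{2/(1+\varepsilon)+o(1)}\gg 1$; since $d_m\rc$ and $d\rc-d_m\rc$ are independent, most of these land in $S\setminus T$ and your assignment has nothing to give them. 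The fix is to use the one thing you \emph{do} know about $S\setminus T$: these positions have $d\rc-d_m\rc>\log\log n$, i.e.\ at least $\log\log n$ \emph{small} symbols. The paper picks, for each such $\rc$, a uniformly random $(\log\log n)$-subset of its available small symbols and then greedily assigns; since at most $O(n^{1-\delta})$ values are ever forbidden in a given row or column, the probability that all $\log\log n$ choices are blocked is $n^{-\omega(1)}$, and a union bound finishes.

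Two smaller points. First, your $B_1$ argument (``each position has $\ge 2$ available symbols, so greedy works'') does not go through: positions in $T$ may have $d\rc=1$, and even with two symbols a greedy over $o(n)$ positions per line can fail if the symbol sets are adversarial. What actually saves you is that $|T|$ is tiny: the direct tail bound $\Prob[d\rc-d_m\rc\le\log\log n]\le (1-p)^n\cdot e^{O((\log\log n)^2)}=n^{-2/(1+\varepsilon)+o(1)}$ (note the exponent $\frac{2}{1+\varepsilon}$, not your $\frac{1}{1+\varepsilon}$, and this alone already exceeds $1$) gives $\E|T|\le n^{2\varepsilon/(1+\varepsilon)+o(1)}=o(\sqrt n)$, so assigning each $\rc\in T$ an independent uniform symbol from its available set produces no repeats at all by a birthday argument. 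Second, your exponent $c=\frac{\varepsilon}{1+\varepsilon}$ for $\Prob[d_m\rc\le\frac{\varepsilon}{1+\varepsilon}\log n]$ is too optimistic; Chernoff with $\alpha=\tfrac12$ yields $e^{-\mu/8}$, not $e^{-\mu/2}$, so $c=\frac{\varepsilon}{4(1+\varepsilon)}$. Fortunately, once you have the correct $c'=\frac{2}{1+\varepsilon}>1$, the product bound you were worried about is not on the boundary at all, and the $\log\log n$ in $p$ plays no role in this lemma.
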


\begin{proof}
	
	For a position $\rc$ let $X_{r,c} = \sum_{i=n+1}^mM_G\left(r,c,i\right)$, and let $Y_{r,c}$ be the indicator of the event $X_{r,c} < \frac{\varepsilon}{1 + \varepsilon} \log n$. Then $X_{r,c}$ are i.i.d.\ binomial random variables with distribution $Bin \left(\varepsilon n, p \right)$, and so by Chernoff's inequality (Theorem \ref{thm:chernoff}):
	\[
	\Prob \left[X_{r,c} \leq \frac{\varepsilon}{1+\varepsilon} \log n \right] \leq n^{ - \frac{\varepsilon}{4 \left( 1 + \varepsilon \right)} + \oone}.
	\]
	Thus $Y_{r,c} \sim Ber \left(q\right)$ for some $q \leq n^{ - \frac{\varepsilon}{4 \left( 1 + \varepsilon \right)} + \oone}$. Now, the expected number of positions in each row or column for which $Y_{r,c} = 1$ is $nq \leq n^{1 - \frac{\varepsilon}{4 \left(1 + \varepsilon \right)} + \oone}$, and again applying Chernoff's inequality we obtain that w.h.p.\ there are at most $n^{1-\delta}$ such positions in each row and column, for some $\delta > 0$.
	
	Let $S = \left\{\rc \in \left[n\right]^2 : Y_{r,c} = 1 \right\}$. By the above, w.h.p.\ $S$ contains only $o \left(n\right)$ positions in each row and column. We show that w.h.p.\ we can find a partial Latin box $B_2$ supported by $M$ whose domain is $S$.
	
	We do this in two stages: We first cover all $\rc \in S$ s.t.\ $d\rc - d_m \rc$ is small with a partial Latin box $B_1$. We then show that w.h.p.\ $B_1$ can be extended to the desired $B_2$.
	
	Let $T = \left\{ \rc \in S : d\rc - d_m \rc \leq \log \log n \right\}$. Observe that
    \[
    d \rc - d_m \rc = \sum_{i=1}^n M \left( r,c,i \right) \geq \sum_{i=1}^n M_G \left( r,c,i \right) \sim Bin \left( n,p \right).
    \]
    We have:
	\begin{equation*}
	\begin{split}
	\Prob \left[ \rc \in T \given \rc \in S \right]
	& \leq \sum_{k=0}^{\log \log n} \binom{n}{k} p^k \left(1 - p\right)^{n - k} 
	= \left(1 - p\right)^{n} \sum_{k=0}^{\log \log n} \binom{n}{k} \left(\frac{p}{1-p}\right)^k \\
	& \leq \left( \frac{\log n}{n} \right)^{\frac{2}{1 + \varepsilon}} \left( 1 + \sum_{k=1}^{\log\log n} \left(\frac{2 e \log n}{k}\right)^k \right) \\
	& \leq \left( \frac{\log n}{n} \right)^{\frac{2}{1 + \varepsilon}} \log\log n \left(6 \log n \right)^{\log\log n}
    = \frac{ e^{O \left( \left( \log\log n \right)^2 \right) } }{ n^{\frac{2}{ 1 + \varepsilon } }}.
	\end{split}
	\end{equation*}
	
	Applying Markov's inequality we conclude that w.h.p.\ $\left|T\right| \leq n^{3 \varepsilon}$.
	
	We construct $B_1$ by covering $T$. Note that for every $\rc \in T$, $d\rc \geq 1$. For each $\rc \in T$, choose $B_1\rc$ uniformly at random from $\left\{ i : M\left(r,c,i\right) = 1 \right\} \cap \left[n\right]$ if this set is non-empty; otherwise choose $B_1\rc$ uniformly at random from $\left\{ i : M\left(r,c,i\right) = 1 \right\} \subseteq \left[m\right] \setminus \left[n\right]$. Note that in the former case $B_1 \rc$ is distributed uniformly amongst $\left[n\right]$ and in the latter case $B_1 \rc$ is distributed uniformly amongst $\left[m\right] \setminus \left[n\right]$. Therefore, $\left\{ B_1\rc \right\}_{\rc \in T}$ is a collection of $O \left(n^{3 \varepsilon}\right) = o \left(\sqrt{n}\right)$ values, each chosen uniformly at random and independently from a set of size $\Omega \left( n \right)$. Hence w.h.p.\ no value appears more than once. This implies that w.h.p.\ $B_1$ is indeed a partial Latin box covering $T$.
	
	The remaining positions $\rc \in S\setminus T$ all satisfy $d \rc - d_m \rc \geq \log\log n$. For each $\rc \in S \setminus T$ let $V' \rc := \left\{ i \in \left[n\right] : M\left(r,c,i\right) = 1 \right\}$. Choose $V \rc \subseteq V'\rc$ of size $\log\log n$ uniformly at random and independently. Note that $\{V\rc\}_{\rc \in S\setminus T}$ is a collection of uniformly random and independent elements of $\binom{[n]}{\log\log n}$. We construct $B_2$ by extending $B_1$ greedily while avoiding collisions: For each $\rc \in S \setminus T$, we choose $B_2 \rc$ uniformly at random from the values in $V \rc$ that have not yet been used in row $r$ or column $c$. W.h.p.\ this procedure succeeds: Indeed, when choosing the value of $B_2$ for any heretofore uncovered $\left(r,c\right) \in S$, there are at most $2 n^{1 - \delta} + n^{3\varepsilon} \leq 3n^{1-\delta}$ previously covered positions in row $r$ and column $c$. Thus there are at most $3 n^{1-\delta}$ forbidden values. Therefore the probability that $V \rc$ contains only forbidden values is at most:
	\[
	\frac{\binom{3 n^{1-\delta}}{\log \log n}}{\binom{n}{\log\log n}}
	= n^{- \omegaone}.
	\]
	Applying a union bound to the $O\left(n^2\right)$ steps in the greedy algorithm, we see that w.h.p.\ the algorithm succeeds in constructing $B_2$.
	
\end{proof}

To construct $B_3$ we need the following lemma.

\begin{lem}\label{lem:packing}
	Let $q = \omega \left( \frac{1}{n} \right)$ and let $M \sim \mnp{n}{n}{n}{q}$. W.h.p.\ $M$ supports a partial Latin box with at most $o \left(n\right)$ uncovered positions in each row and column.
\end{lem}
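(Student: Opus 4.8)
Lemma \ref{lem:packing} asks us to show that a random $n \times n \times n$ array with edge-probability $q = \omega(1/n)$ w.h.p.\ supports a partial Latin box missing only $o(n)$ positions per line. The plan is to analyze the \emph{random greedy algorithm} for building a partial Latin box symbol-by-symbol (equivalently, color class by color class), exactly as in the classical analysis of random greedy Latin square completion. I would proceed by filling in symbols $1, 2, \ldots, n$ in order; when it is symbol $v$'s turn, the positions already covered by symbols $1, \ldots, v-1$ form a partial Latin box, and I want to place a partial permutation matrix of $1$s in the $M(\cdot,\cdot,v)$ plane, supported on entries equal to $1$, that avoids the rows and columns already used by symbol $v$ --- wait, since each symbol is used at most once per line, I instead want: among the rows $r$ and columns $c$ not yet ``saturated'' in a suitable sense, greedily match. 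Concretely, track for each symbol $v$ the bipartite graph $\Gamma_v$ on (available rows) $\times$ (available columns) whose edges are the pairs $\rc$ with $M(r,c,v) = 1$ and $\rc$ not yet covered; greedily pick a maximal matching in $\Gamma_v$ and add those to the partial Latin box with symbol $v$.

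The key steps, in order: (1) Set up the iteration and the ``state'' after processing symbols $1, \ldots, v$: let $u_r^{(v)}$ be the number of uncovered positions in row $r$, similarly $u_c^{(v)}$ for columns, and let the target be that these stay close to their ``expected'' trajectory. (2) Show a one-step concentration statement: conditioned on the state being well-behaved after step $v$, the number of positions we can cover in row $r$ at step $v+1$ is concentrated, because each still-uncovered entry $M(r,c,v+1)$ is an independent $\mathrm{Ber}(q)$ variable, so the relevant bipartite graph is a random bipartite graph with linearly many vertices on each side and expected degree $\Theta(qn) = \omega(1)$; a random bipartite graph with those parameters has a matching covering all but $o(n)$ vertices of each side w.h.p. (3) Iterate / apply a Azuma-type or differential-equations-method argument over the $n$ symbols to conclude that w.h.p.\ at the end only $o(n)$ positions per line remain uncovered. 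Since we only need an $o(n)$ bound rather than a precise trajectory, the bookkeeping can be kept crude: it suffices that at each step the fraction of uncovered entries in each line is $1 - \Omega(1/\text{polylog})$ or so, and that the greedy matching covers all but an $o(1)$ fraction.

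The main obstacle I anticipate is maintaining \emph{pseudorandomness} of the set of uncovered positions across the many greedy steps: after placing many symbols the uncovered set is no longer a product of independent Bernoullis, so I cannot directly invoke a matching result for $G(n,n;q)$. The standard fix is to not expose the randomness of $M(r,c,v+1)$ until symbol $v+1$ is processed --- i.e., reveal the planes one at a time --- so that, conditioned on everything done through symbol $v$, the entries in plane $v+1$ restricted to the currently-uncovered positions are \emph{genuinely} i.i.d.\ $\mathrm{Ber}(q)$. Then the only structural input needed about the uncovered set is that it has $\Omega(n)$ uncovered positions in the rows and columns that still matter, which follows by induction from the $o(n)$-deficiency guarantee at the previous step. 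The matching tool itself: a balanced bipartite graph on $N + N$ vertices where each edge is present independently with probability $q' = \omega(1/N)$ has, w.h.p., a matching saturating all but $o(N)$ vertices on each side --- this is elementary (e.g., greedily, or by a defect-Hall / expansion argument, or directly from \cite{goel2010perfect}-type bounds). Combining the per-step loss of $o(n)$ over $n$ steps naively gives $o(n) \cdot n$, which is too weak, so I would instead phrase the invariant multiplicatively (the number of uncovered positions in each line shrinks by a factor $1 - \Omega(1)$ whenever it exceeds, say, $n / \log^2 n$, and once it drops below that threshold it stays $o(n)$), which closes the induction and yields the claimed $o(n)$ bound.
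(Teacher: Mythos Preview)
Your plane-by-plane strategy has two real gaps. First, the accounting: placing a partial permutation matrix at step $v$ decreases the number of uncovered positions in any fixed row by at most $1$, so your ``shrinks by a factor $1-\Omega(1)$'' invariant cannot hold as stated; what you actually need is that each \emph{individual} row is matched in all but $o(n)$ of the $n$ steps, and a per-step matching deficiency of $o(n)$ only controls the \emph{average} row deficiency, not the maximum. Second, and more fundamentally, the structural hypothesis you rely on --- that $U_v$ has $\Omega(n)$ uncovered positions in each row and column --- does not force $\Gamma_{v+1}$ (or even $U_v$ itself) to have a near-perfect matching. A bipartite graph on $[n]\cup[n]$ with minimum degree $\varepsilon n$ can have maximum matching of size only $2\varepsilon n$ (let rows $1,\ldots,(1-\varepsilon)n$ see only columns $1,\ldots,\varepsilon n$ and the remaining rows be complete), and nothing in your hypotheses rules out $U_v$ drifting toward such a configuration when the matchings are chosen greedily without further randomization. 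To repair this you would have to randomize the matching at each step and prove that $U_v$ stays pseudorandom throughout --- essentially the machinery of Lemma~\ref{lem:Matthew's_method}, which you are explicitly trying to avoid.

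The paper sidesteps these issues by a different route: it passes to the tripartite $3$-uniform hypergraph $H_M$ and runs the random greedy triangle-packing process in which triangles are exposed \emph{one at a time in uniformly random order} (not plane by plane), adding each to the SET if it is edge-disjoint from the current packing. Wormald's differential equations method, tracking $c_v(i)=n-d_{S_i}(v)$ and the codegrees $d_{uv}(i)$ along the trajectories $(1+2x)^{-1/2}$ and $(1+2x)^{-1}$, then gives $d_{S_m}(v)=(1-o(1))n$ for every vertex $v$ with very high probability once $m=\omega(n^2)$ triangles have been exposed --- which is precisely the regime $q=\omega(1/n)$.
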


We prove Lemma \ref{lem:packing} by showing that w.h.p.\ a random greedy algorithm succeeds in finding an appropriate partial Latin box. The proof is deferred to Appendix \ref{app:rga}.

\begin{proof}[Proof of Theorem \ref{thm:hitting_time}]
	
	Recall that $M = M_G + M_B \sim \geqnp$. W.h.p.\ $M$ supports a partial Latin box $B_2$ as per the conclusion of Lemma \ref{lem:low_deg}. By Lemma \ref{lem:packing}, w.h.p.\ $M_G \sim \mnp{n}{n}{m}{p}$ supports a partial Latin box $B_3$ covering all but at most $o \left(n\right)$ positions in each row and column, and using only values from $\left[ n \right]$.
	
	For $i=2,3$ let $S_i$ be the set of positions covered by $B_i$. Define the partial Latin box $B_4$ as follows: For all $\rc \in S_2$ set $B_4 \rc = B_2 \rc$. For all $\rc \in S_3 \setminus S_2$ s.t.\ $B_3 \rc$ isn't used by $B_2$ in row $r$ or column $c$, set $B_4 \rc = B_3 \rc$. $B_4$ is thus a partial Latin box covering all but at most $o \left( n \right)$ positions in each row and column, and in which each row and column uses at most $o \left( n \right)$ values from $\left\{n+1,n+2,\ldots, m \right\}$. Additionally, if $\rc$ isn't covered by $B_4$ then (since $\rc$ is not covered by $B_2$) $d_m \rc \geq \frac{\varepsilon}{1+\varepsilon} \log n$. We now show that w.h.p.\ a random greedy algorithm succeeds in extending $B_4$ to a proper Latin box.
	
	In a manner similar to the proof of Lemma \ref{lem:low_deg}, for each uncovered $\rc$ let $W' \rc = \left\{ v \in \left[m\right] \setminus \left[ n \right] : M \left(r,c,v\right) = 1 \right\}$, and let $W \rc \subseteq W' \rc$ be uniformly random subsets of size $\frac{\varepsilon}{1+\varepsilon} \log n$ chosen independently.
    
    Iterate over the uncovered elements in an arbitrary order. For every uncovered $\rc$ choose $B \rc$ uniformly at random from $W \rc$ that have not previously been used in the same row or column.  At each step of the algorithm, there are at most $o \left(n\right)$ forbidden values, so the probability that all available values are forbidden is at most:
	\[
	\frac{\binom{o \left(n\right)}{\frac{\varepsilon}{1+\varepsilon} \log n }}{\binom{\varepsilon n}{\frac{\varepsilon}{1+\varepsilon} \log n }}
	= n^{-\omegaone}.
	\]
	
	There are $O \left(n^2\right)$ steps in the greedy algorithm so, applying a union bound, the probability of failure is $\oone$.
	
\end{proof}

\subsection*{Acknowledgment} We wish to thank the anonymous referees for their helpful comments, which greatly improved the exposition, as well as for finding an error in a previous version of the paper.

\appendix

\section{Random Greedy Packing in Random Hypergraphs}\label{app:rga}

\newcommand{\hthree}{{H_3 \left( n \right)}}

In this section we prove Lemma \ref{lem:packing}. Although it is a statement about random arrays, it is convenient to reformulate it in terms of random hypergraphs. This is because the random greedy algorithm we are about to introduce is similar to the \termdefine{triangle removal process} analyzed, among others, by Wormald \cite[Section 7.2]{Wo99} and Bohman, Frieze, and Lubetzky \cite{bohman2015random}.

\subsection{Notation and Terminology}

We denote by $\hthree$ the set of tripartite, $3$-uniform hypergraphs whose vertex set is $\left[n\right] \sqcup \left[n\right] \sqcup \left[n\right]$. A \termdefine{triangle} is a partite vertex set of size $3$ and an \termdefine{edge} is a partite vertex set of size $2$. To avoid unnecessary delimiters we sometimes write $abc$ for the triangle $\{a,b,c\}$, and $ab$ for the edge between $a$ and $b$. We denote by $\mathcal{H}_3 \left(n;p\right)$ the distribution on $\hthree$ where each triangle is included in the hypergraph with probability $p$, independently of the other triangles, and we denote by $\mathcal{H}_3 \left( n ; m \right)$ the distribution on $\hthree$ where the triangle set is a uniformly random element of $\binom{\left[n\right]^3}{m}$.

Let $H \in \hthree$, and let $T \left(H\right)$ denote the set of its triangles. A set $S \subseteq T \left(H\right)$ is a \termdefine{set of edge-disjoint triangles} (\termdefine{SET}) in $H$ if for all $t_1,t_2 \in S$, $\left|t_1 \cap t_2\right| \geq 2 \implies t_1 = t_2$. If $uv$ is an edge, we say it is \termdefine{covered} by $S$ if there exists some $t \in S$ s.t.\ $\left\{ u,v \right\} \subseteq t$. In this case we write $uv \in G \left(S\right)$. We say a triangle $t$ is \termdefine{edge-disjoint from $S$} if none of its edges are covered by $S$. For $v \in V \left(H\right)$, let $d_S \left(v\right)$ be the number of triangles in $S$ containing $v$.

For $a,b \in \R$, we write $ a \pm b$ to indicate some quantity in the interval $\left[ a - \left|b\right| , a + \left|b\right| \right]$.
We say that an event occurs \termdefine{with very high probability} (\textbf{w.v.h.p.}) if it occurs with probability $1 - n^{-\omegaone}$.

\subsection{From Arrays to Hypergraphs}

Let $M \in M_{n,n,n}$. We define the hypergraph $H_M \in \hthree$ by setting $T (H_M) = \left\{ (i,j,k) \in [n]^3 : M(i,j,k) = 1 \right\}$. This induces a natural correspondence between SETs in $H_M$ and partial Latin boxes supported by $M$.

Lemma \ref{lem:packing} now follows from: 

\begin{lemma}\label{lem:hg_packing}
	Let $p = \omega \left( \frac{1}{n} \right)$ and let $H \sim \mathcal{H}_3 \left(n;p\right)$. W.h.p.\ $T\left( H \right)$ contains an SET $S$ s.t.\ for every vertex $v$, $d_S \left( v \right) = \left(1 - \oone \right) n$.
\end{lemma}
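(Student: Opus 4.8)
The plan is to run the \emph{random greedy SET algorithm}: order the triangles of $T(H)$ uniformly at random and scan them one at a time, adding the current triangle to $S$ precisely when it is edge-disjoint from every triangle already in $S$. We must show that w.h.p.\ the resulting SET $S$ has $d_S(v) = (1-\oone)n$ for every vertex $v$; equivalently, all but $\oone\cdot n$ of the edges at each vertex become covered. Two preliminary reductions streamline the analysis. First, for fixed $\eta>0$ the event ``$T(H)$ contains an SET with all vertex-degrees at least $(1-\eta)n$'' is monotone increasing in $H$, and the triangle set of $\mathcal{H}_3(n;p')$ is contained in that of $\mathcal{H}_3(n;p)$ under the obvious coupling when $p'\le p$; hence it suffices to prove the lemma when $p=\omega(1/n)$ but $pn\to\infty$ as slowly as we like, say $pn\le \log\log\log n$. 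Second, assign to each of the $n^3$ potential triangles an independent label $x_t\sim U[0,1]$; then $T(H)=\{t:x_t<p\}$ and scanning $T(H)$ in increasing order of label is a uniformly random order. Therefore the greedy SET built on $H$ equals $S(\tau_f)$, where $S(\tau)$ is the output of the random greedy triangle-packing process on the \emph{complete} tripartite $3$-graph on $[n]\sqcup[n]\sqcup[n]$ (all $n^3$ triangles scanned by increasing label) after about the first $\tau n^2$ triangles, and $\tau_f:=pn$. Thus the randomness of $H$ is absorbed entirely into a stopping time, and $\tau_f\to\infty$ slowly: we need only analyze a short initial segment of a very well-studied process.

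For that analysis I would use the differential-equations method, following Wormald's treatment of the triangle-removal process \cite[Section 7.2]{Wo99} and Bohman, Frieze, and Lubetzky \cite{bohman2015random}. Parametrize time so that by time $\tau$ the process has scanned about $\tau n^2$ triangles, and call a triangle \emph{open} at time $\tau$ if none of its three edges is yet covered by $S(\tau)$. For a vertex $v$ and a part $Y$ not containing $v$, let $Z_v^Y(\tau)$ be the number of uncovered edges from $v$ into $Y$; also track the number $Q_e(\tau)$ of open triangles through each edge $e$. Writing $Z_v^Y=n\,\zeta_v^Y$ and letting $u(\tau)$ be the current uncovered-edge fraction, the expected one-step dynamics are that a scanned triangle is added with probability $\approx u^3$ and, when added through $v$, covers exactly one $v$--$Y$ edge; this gives $d\zeta_v^Y/d\tau\approx -u\,(\zeta_v^Y)^2$, with symmetric solution $\zeta_v^Y\equiv u(\tau)$ solving $u'=-u^3$, $u(0)=1$, i.e.\ $u(\tau)=(1+2\tau)^{-1/2}$. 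The system is self-correcting, $\frac{d}{d\tau}(\zeta_v^Y-u)\approx -u(\zeta_v^Y-u)(\zeta_v^Y+u)<0$ whenever $\zeta_v^Y>u$, so no vertex can lag behind. Since $\tau_f=pn\to\infty$ we get $u(\tau_f)=(1+2\tau_f)^{-1/2}=\oone$, hence $Z_v^Y(\tau_f)=(1+\oone)\,n\,u(\tau_f)=\oone\cdot n$ and $d_S(v)=n-Z_v^Y(\tau_f)=(1-\oone)n$ for every $v$, which is the claim.

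To make this rigorous I would run the usual martingale argument: for fixed $v$ and $Y$ take the Doob decomposition of $Z_v^Y(\tau)$ and show that w.h.p.\ it stays within $\oone\cdot n$ of $n\,u(\tau)$ for all $\tau\le\tau_f$, simultaneously over all $v$ (and, by discretizing $\tau$ and using monotonicity in $\tau$, over all times); and likewise that $Q_e(\tau)$ stays near $\approx n\,u(\tau)^2$ for every edge $e$, since the compensator of $Z_v^Y$ is governed by the number of open triangles through $v$, which is in turn controlled by the $Q_e$'s. Two features of our regime keep the error control comfortable: (i) $Z_v^Y$ decreases by $1$ at most $n$ times over the whole process (once per triangle of $S$ through $v$), and the sum of the conditional variances of its martingale part is $O(\tau_f\,n)$, so a Freedman-type inequality yields deviations $O(\sqrt{\tau_f\,n\log n})=\oone\cdot n$ — ample room for a union bound over the $O(n)$ vertices and $O(n^2)$ edges; (ii) since $\tau_f\le\log\log\log n$ grows so slowly, $u(\tau)$ never drops below $(\log\log\log n)^{-1/2}$, so the number of open triangles through each edge stays $\ge n^{1-\oone}$ throughout, which comfortably validates the trend hypotheses and their error bounds at every step.

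The main obstacle is exactly this simultaneous, uniform-in-time concentration of all the tracked quantities together with the verification of the trend hypotheses — the standard but delicate bookkeeping at the heart of the differential-equations method, where the drift of one statistic ($Z_v^Y$) depends on another ($Q_e$, or finer open-triangle counts) and one closes the system at the right level, or invokes the self-correcting estimates. By contrast, the fact that the host hypergraph $H$ is itself random is \emph{not} a genuine difficulty once the label coupling is in place: it only determines when to stop, and the resulting stopping time $\tau_f=pn\to\infty$ is precisely what forces $u(\tau_f)=\oone$.
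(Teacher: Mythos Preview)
Your proposal is correct and takes essentially the same approach as the paper: couple the greedy process on $H$ to the random hypergraph process on the complete tripartite $3$-graph via uniform labels, use monotonicity to reduce to a short initial segment (the paper stops at $m\le n^{2+1/100}$ where you cap at $\tau_f\le\log\log\log n$), and track the uncovered-edge counts $c_v$ (your $Z_v^Y$) and permissible-triangle counts $d_{uv}$ (your $Q_e$) along the same trajectories $y(\tau)=(1+2\tau)^{-1/2}$ and $z(\tau)=y(\tau)^2$. The paper carries out the martingale concentration via Azuma--Hoeffding after rescaling into blocks of $n$ steps, whereas you invoke a Freedman-type bound; otherwise the arguments coincide.
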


\subsection{Proof of Lemma \ref{lem:hg_packing}}

In the \termdefine{random hypergraph process}, the triangles of the complete tripartite $3$-uniform hypergraph $K^{\left(3\right)}_{n,n,n}$ are considered one by one in a uniformly random order $t_1, t_2, \ldots, t_{n^3}$. 
This process generates a sequence of hypergraphs $H_0, H_1, \ldots, H_{n^3} \in \hthree$, where $T \left(H_0\right) = \emptyset$ and $T \left( H_{i+1} \right) = T \left( H_i \right) \cup \left\{ t_{i+1} \right\}$. We couple this with the following process: $S_0 = \emptyset$, and $ S_{i+1} = S_i \cup \left\{t_{i+1}\right\}$ if $t_{i+1}$ is edge disjoint from $S_i$, and $S_{i+1} = S_i$ otherwise. Observe that for every $i$, $S_i$ is an SET in $H_i$.
The next proposition says that w.v.h.p.\ the vertex degrees in $S_i$ are concentrated.

\begin{pr}\label{pr:greedy}
	There exists some $\delta > 0$ s.t.\ w.v.h.p.\ for every $v \in \left[n\right] \sqcup \left[n\right] \sqcup \left[n\right]$ and every $0 \leq m \leq n^{2+\delta}$:
	\[
	d_{S_m} \left( v \right) = \left( 1 - \left(1 \pm \oone \right) \frac{1}{\sqrt{1+2\frac{m}{n^2}}}\right)n .
	\]
\end{pr}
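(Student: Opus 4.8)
## Proof proposal for Proposition~\ref{pr:greedy}

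The plan is to track the random variable $d_{S_m}(v)$ via the differential equation method, exactly as in the analysis of the triangle removal process. I would first identify the right continuous approximation: writing $m = xn^2$ and normalizing degrees by $n$, I expect $d_{S_m}(v)/n \approx \phi(x)$ where $\phi$ solves an autonomous ODE. To guess the ODE, note that at step $m$ the triangle $t_{m+1}$ is a uniformly random triangle of $K^{(3)}_{n,n,n}$; it is added to $S_m$ and increments $d_{S_m}(v)$ precisely when all three of its edges are uncovered by $S_m$ \emph{and} it contains $v$. Conditioned on containing $v$, the probability that its other two edges (those not at $v$) are uncovered is governed by the \emph{edge-survival} statistic: if a roughly $q(x)$-fraction of edges at a typical vertex remain uncovered, then the chance of a random triangle at $v$ being addable is about $q(x)^{O(1)}$ times a factor for the edge opposite $v$. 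Carrying this bookkeeping through, one finds that the uncovered-degree fraction $q$ and the SET-degree fraction $\phi = 1-q$ are linked by an ODE whose solution is $q(x) = 1/\sqrt{1+2x}$, i.e. $\phi(x) = 1 - 1/\sqrt{1+2x}$, which matches the claimed formula with $x = m/n^2$. So the target is $d_{S_m}(v) = (1 - (1\pm o(1))(1+2m/n^2)^{-1/2})n$.

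The execution would proceed as follows. First I would set up the random hypergraph process and, for each vertex $v$ and each edge $e$, define the "covered/uncovered" indicator at time $i$; I would also track the number $U_i$ of uncovered edges globally and the local quantities $u_i(v) = $ number of uncovered edges meeting $v$ in each of the two relevant directions. Second, I would show these quantities satisfy a bounded-difference (martingale-type) concentration: changing one step $t_i$ of the process changes each $d_{S_i}(v)$ by at most $1$ and changes the edge-count statistics by $O(1)$, so Azuma/Freedman-type inequalities give deviations of order $\sqrt{n}\,\mathrm{polylog}(n)$, hence $n^{-\omega(1)}$ failure after a union bound over the $O(n)$ vertices and $O(n^{2+\delta})$ times — this is where "w.v.h.p." comes from and why $\delta>0$ must be chosen so that $n^{2+\delta}$ is still $o(n^{3})$, keeping the process in its "sparse" regime where the trajectory approximation is valid. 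Third, I would establish the one-step expected increments: $\mathbb{E}[d_{S_{i+1}}(v) - d_{S_i}(v) \mid H_i, S_i]$ equals (number of triangles at $v$ that are edge-disjoint from $S_i$) divided by (number of remaining triangles), and similarly for the edge statistics; using the concentration from step two to replace the random graph quantities by their trajectory values, these increments match the right-hand sides of the ODE system up to $O(\mathrm{polylog}/n)$ errors. Fourth, I would invoke a standard differential-equation-method theorem (as in Wormald \cite{Wo99} or the Bohman--Frieze--Lubetzky analysis \cite{bohman2015random}) to conclude that w.v.h.p. all the tracked quantities stay within $n^{-\Omega(1)}$ (suitably, $o(1)$ relative error) of their ODE predictions throughout $0 \le m \le n^{2+\delta}$, which gives the stated formula for $d_{S_m}(v)$.

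The main obstacle is the self-referential nature of the increment computation: the probability that a random triangle at $v$ is addable depends not just on the degree of $v$ but on the full distribution of uncovered-edge degrees among the \emph{other} vertices it could hit, so one cannot close the system with a single variable per vertex. The standard fix — which I would follow — is to show that the process is self-correcting and quasi-random enough that \emph{all} vertices have uncovered degrees concentrated around the \emph{same} trajectory value $u(x)n$, so that the "opposite edge" factor is itself $q(x) + o(1)$ uniformly; establishing this uniform concentration simultaneously with the degree concentration (a joint induction on $m$) is the technical heart of the argument. A secondary but routine obstacle is checking that the error terms accumulated over $\Theta(n^{2+\delta})$ steps remain $o(n)$ in absolute terms and $o(1)$ in relative terms near $x=0$, where $\phi(x)$ itself is small; this forces the relative-error statement to be phrased with the $(1\pm o(1))$ multiplying the $1/\sqrt{1+2m/n^2}$ term rather than the whole expression, exactly as in the proposition.
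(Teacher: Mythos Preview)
Your proposal is essentially the paper's approach: differential-equation method tracking vertex uncovered degrees $c_v(i) = n - d_{S_i}(v)$ along the trajectory $ny(i/n^2)$ with $y(x) = (1+2x)^{-1/2}$, Azuma--Hoeffding concentration, and a joint induction to control the self-referential increments. One imprecision worth flagging: the second quantity the paper actually tracks is not a vertex-level count $u_i(v)$ or a global $U_i$, but the \emph{edge codegree} $d_{uv}(i)$, the number of permissible triangles containing the uncovered edge $uv$, shown to follow $nz(i/n^2)$ with $z(x)=(1+2x)^{-1}=y(x)^2$. This pair $(c_v,d_{uv})$ is what closes the system, via $y'=-yz$, $z'=-2z^2$: the expected change in $c_v$ is governed by $\sum_{u} d_{vu}$, and the expected change in $d_{uv}$ by $\sum_w (d_{uw}+d_{vw})$. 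Your list of tracked variables (vertex uncovered degrees alone) would not close the system --- knowing all $c_v$ does not determine the number of permissible triangles at $v$, since that requires the intersection sizes $|\{w: vw,\,uw \text{ uncovered}\}|$ --- but the ``quasirandomness'' you invoke as the fix is exactly concentration of the $d_{uv}$, so you have the right idea stated indirectly. The paper also handles a technical wrinkle you should anticipate: the one-step change in $d_{uv}$ can be $1$, which is too large for Azuma over $n^{2+\delta}$ steps, so the proof rescales to blocks of $n$ steps and first shows (via a separate union bound) that no $d_{uv}$ changes by more than $\log n$ in any such block.
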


Before proving Proposition \ref{pr:greedy}, we first describe how Lemma \ref{lem:hg_packing} follows from Proposition \ref{pr:greedy}.

The \termdefine{random greedy packing algorithm in $H \in \hthree$} is the following probabilistic procedure: Set $S = \emptyset$. As long as there are triangles in $T\left(H\right)$ that are edge disjoint from $S$, choose one uniformly at random and add it to $S$. If there are no such triangles, halt.
Say that a hypergraph in $\hthree$ is \termdefine{good} if it satisfies the conclusion of Lemma \ref{lem:hg_packing}.
Let  $H \sim \mathcal{H}_3 \left(n;p\right)$. We claim that w.h.p.\ $H$ is good, and this is witnessed by the result of the random greedy packing algorithm in $H$.

Clearly, the distribution of $H$ conditioned on $|T(H)| = m$ is identical to $H_m$. Moreover, given $H_m$, $S_m$ is distributed identically to the result of the random greedy packing algorithm in $H_m$. Note also that the probability that $H_m$ is good is increasing in $m$. 
As $|T(H)|\sim Bin \left(n^3,p\right)$ and $p = \omega \left(\frac{1}{n}\right)$, there exists some $k = \omega \left( n^2 \right)$, s.t.\ w.h.p.\ $|T(H)| \geq k$. Proposition \ref{pr:greedy} implies that $H_k$ is good w.v.h.p.
Therefore,
\begin{align*}
\Pr [H \text{ is good}] \geq & \sum_{m = k}^{n^3} {\Pr [H \text{ is good} \given |T(H)| = m] \Pr [ |T(H)| = m] } \\
= & \sum_{m=k}^{n^3} {\Pr [H_m \text{ is good}] \Pr [|T(H)|=m]} \\
\geq & \Pr [ |T(H)| \geq k] \Pr [H_k \text{ is good}]  = 1-o(1).
\end{align*}

We turn to prove Proposition \ref{pr:greedy}.

\begin{proof}
	
	We prove the proposition for $\delta = \frac{1}{100}$.
    
    In the spirit of the differential equation method of Wormald \cite{Wo99} we track a set of random variables throughout the hypergraph process by modeling their evolution on a system of differential equations.
    
    We make use of the following version of the Azuma-Hoeffding inequality, which follows from \cite[Lemma 4.2]{Wo99}.
    \begin{lem}\label{lem:azuma}
    	Let $A_0 \subseteq A_1 \subseteq \ldots \subseteq A_N$ be a filtration of a finite probability space. Let $X_0 , X_1 , \ldots , X_N$ be a sequence of random variables s.t.\ for every $i$, $X_i$ is $A_i$-measurable. Assume that for some $C>0$, $\left| X_{i+1} - X_i \right| \leq C$ for all $i$. Assume further that for all $i$, $\E \left[ X_{i+1} - X_i \given A_i \right] \leq 0$, i.e.\ $X_0, X_1 , \ldots, X_N$ is a supermartingale. Finally, assume $X_0 \leq 0$. Then, for all $\lambda >0$:
        \[
        \Prob \left[ X_N > \lambda \right] \leq \exp \left( - \frac{\lambda^2}{2 N C^2} \right) .
        \]
    \end{lem}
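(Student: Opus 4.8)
The plan is to prove this by the exponential moment (Chernoff-for-martingales) method; it is the classical Azuma--Hoeffding inequality for supermartingales, and the argument is standard. Write $Y_i = X_i - X_{i-1}$ for $1 \le i \le N$, so that each $Y_i$ is $A_i$-measurable, $|Y_i| \le C$ almost surely, and $\E[Y_i \mid A_{i-1}] \le 0$. Since $X_0 \le 0$, we have $X_N = X_0 + \sum_{i=1}^N Y_i \le \sum_{i=1}^N Y_i$; hence it suffices to bound $\Prob\bigl[\sum_{i=1}^N Y_i > \lambda\bigr]$, and the hypothesis $X_0 \le 0$ (rather than $X_0 = 0$) is used only for this reduction.

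The key input is a conditional form of Hoeffding's lemma: if $Z$ satisfies $|Z| \le C$ and $\E[Z \mid A_{i-1}] \le 0$, then $\E[e^{tZ} \mid A_{i-1}] \le e^{t^2 C^2 / 2}$ for every $t > 0$. To prove it, apply convexity of $z \mapsto e^{tz}$ pointwise on $[-C,C]$ to get the secant bound $e^{tZ} \le \frac{C - Z}{2C} e^{-tC} + \frac{C + Z}{2C} e^{tC}$, and then take conditional expectations: writing $\mu = \E[Z \mid A_{i-1}] \le 0$ and using $e^{tC} - e^{-tC} \ge 0$, the right-hand side becomes at most $\tfrac12(e^{tC} + e^{-tC}) + \frac{\mu}{2C}(e^{tC} - e^{-tC}) \le \cosh(tC) \le e^{t^2C^2/2}$, the last step being the termwise comparison $(2k)! \ge 2^k k!$.

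Now I would run the usual telescoping argument. Fix $t > 0$ and set $W_j = \exp\bigl(t \sum_{i=1}^j Y_i\bigr)$ for $0 \le j \le N$, so $W_0 = 1$. As $W_{j-1}$ is $A_{j-1}$-measurable, the conditional Hoeffding lemma gives $\E[W_j \mid A_{j-1}] = W_{j-1}\,\E[e^{tY_j} \mid A_{j-1}] \le W_{j-1}\,e^{t^2 C^2/2}$; taking expectations and iterating over $j$ yields $\E[W_N] \le e^{N t^2 C^2/2}$. By Markov's inequality, $\Prob\bigl[\sum_{i=1}^N Y_i > \lambda\bigr] \le e^{-t\lambda}\,\E[W_N] \le \exp\bigl(-t\lambda + \tfrac12 N t^2 C^2\bigr)$, and choosing $t = \lambda/(NC^2)$ gives $\exp\bigl(-\lambda^2/(2NC^2)\bigr)$. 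Combined with $X_N \le \sum_{i=1}^N Y_i$, this is the claimed bound. There is no real obstacle here; the only subtlety is that Hoeffding's lemma must be used in its conditional form — the secant bound is applied pointwise and only then integrated against $\E[\,\cdot \mid A_{i-1}]$ — and, as an alternative, one may simply cite \cite[Lemma 4.2]{Wo99} or any standard treatment of Azuma's inequality, since the statement is a well-known special case.
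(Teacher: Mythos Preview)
Your proof is correct: the exponential-moment argument with the conditional Hoeffding lemma is the standard route to Azuma--Hoeffding, and every step you wrote checks out (the secant bound, the use of $\mu \le 0$ together with $t>0$ to drop the linear term, the inequality $\cosh x \le e^{x^2/2}$, and the optimization over $t$).

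The paper itself does not prove this lemma at all; it simply states it and says it ``follows from \cite[Lemma 4.2]{Wo99}.'' So you have supplied a complete self-contained argument where the paper only gives a citation. Your closing remark that one could alternatively just cite Wormald is exactly what the paper does.
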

	
	We define the following functions on $\left[0,\infty\right)$, whose relevance will become apparent presently:
	
	\begin{align*}
	\begin{split}
	&y \left(x\right) = \frac{1}{\sqrt{1+2x}} \\
	&z \left(x\right) = \frac{1}{1+2x} 
	\end{split}
	\end{align*}
	
	These satisfy the differential equations:
	
	\begin{align*}
	\begin{split}
	& y' = - yz \\
	& z' = -2 z^2 
	\end{split}
	\end{align*}
	
	We now define the variables we want to track. For every vertex $v$ and $0 \leq i \leq n^{2+\delta}$ write:
	\[
	c_v \left( i \right) = n - d_{S_i} \left( v \right) .
	\]
	
	Next, for $0\leq i \leq n^{2+\delta}$ we define the set of \termdefine{permissible} triangles:
	\[
    A_i = \left\{ t_j : i < j \leq n^3, \forall t \in S_i, \left|t \cap t_j\right| \leq 1 \right\} .
    \]
	In words, $A_i$ is the set of triangles not in $H_i$ that, if selected at time $i+1$, will be included in $S_{i+1}$.
	
	For every uncovered edge $uv$, we track the number of permissible triangles containing it. For convenience, we associate a random variable to covered edges as well:
	\[
	d_{uv} \left( i \right) = 
	\begin{cases}
	\left|\left\{ t \in A_i : \left\{ u,v \right\} \subseteq t \right\} \right| & uv \notin G \left( S_i \right) \\
	nz \left( \frac{i}{n^2} \right) & otherwise
	\end{cases}
    .
	\]
	
	We will show that w.v.h.p.\ for every $0 \leq i \leq n^{2 + \delta}$, every vertex $v$, and every edge $uv$:
	\begin{align}\label{eq:goal}
	\begin{split}
	& c_v \left( i \right) = \left( 1 \pm \oone \right)ny \left(\frac{i}{n^2}\right) \\
	& d_{uv} \left( i \right) = \left( 1 \pm \oone \right)nz \left(\frac{i}{n^2}\right) .
	\end{split}
	\end{align}
	In particular, this will prove the proposition.
	
	We first consider the evolution of the random variables $d_{uv}$. Note that if $uv$ is covered by $S_i$ then (by definition) $d_{uv} \left( i \right) = n z \left(\frac{i}{n^2}\right)$ and there is nothing to prove. So assume that $uv$ is not covered by $S_i$. How might $d_{uv}$ change during step $i+1$? Well, if $uv$ remains uncovered, then $d_{uv}$ will change if and only if some permissible triangle $t \in A_i$ containing $uv$ is no longer in $A_{i+1}$. Now, $uv \subseteq t \in A_i$ will not be in $A_{i+1}$ if $\left| t_{i+1} \cap t \right| = 2$, and $t_{i+1} \in A_i$. In this case, $d_{uv}$ decreases by 1. For every $uvw \in A_i$ there are $d_{uw} \left(i\right) + d_{vw} \left(i\right) - 2$ triangles in $A_i$ that have this effect. Thus, observing that at step $i$ there are $\left( 1 - O \left( n^{\delta - 1} \right) \right) n^3$ triangles remaining to be considered that do not contain $uv$:
	\begin{align}\label{eq:prob}
	\begin{split}
	\Prob & \left[ d_{uv}\left( i+1 \right) \neq d_{uv}\left( i \right) \given H_i,S_i, uv \notin G \left(S_{i+1} \right) \right]
	\\ & = \frac{1}{\left( 1 - O \left( n^{\delta - 1} \right) \right) n^3} \sum_{uvw \in A_i} \left( d_{uw}\left( i \right) + d_{vw}\left( i \right) - 2 \right)
	 \leq \frac{4}{n} .
	\end{split}
	\end{align}
	Note that since the underlying graph is tripartite, so long as $uv$ remains uncovered, $d_{uv}$ can decrease by at most $1$ in a single step. Therefore:
	\begin{align}\label{eq:expec}
    	\begin{split}
	\E & \left[ d_{uv}\left( i+1 \right) - d_{uv}\left( i \right) \given H_i,S_i, uv \notin G \left(S_{i+1} \right) \right]
	\\ & = - \Prob \left[ d_{uv}\left( i+1 \right) \neq d_{uv}\left( i \right) \given H_i,S_i, uv \notin G \left(S_{i+1} \right) \right] .
    \end{split}
	\end{align}
    
    Lemma \ref{lem:azuma} (the Azuma--Hoeffding inequality) requires control over the maximal one-step change of the sequence of random variables. Although the maximum change in $d_{uv}$ is $1$, this is too large for Lemma \ref{lem:azuma} to be useful. Therefore, we show that $d_{uv}$ cannot change too much in any $n$ consecutive steps, which, after rescaling, will enable an application of Lemma \ref{lem:azuma}.
    First, note that for any $1 \leq j < n$, we have:
    \begin{equation}\label{eq:cover prob}
    \Prob \left[ uv \in G(S_{i+n}) \given H_{i+j-1}, S_{i+j-1}, uv \notin G(S_{i+j}) \right] \leq \frac{2}{n}.
    \end{equation}
    Indeed, the triangles $t_{i+j+1},\ldots,t_{i+n}$ are a uniformly random subset of size $n-j \leq n$, that are chosen from a set of size at least $n^3/2$. Furthermore, the number of triangles containing $uv$ is bounded from above by $n$. Thus, the probability that one of these was chosen is at most $2n^2/n^3 = 2/n$.
    
    Now, let $1 \leq i \leq n^{2+\delta} - n$ and $1 \leq j < n$. By the law of total probability and Inequalities \eqref{eq:prob} and \eqref{eq:cover prob}, for any $H_{i+j},S_{i+j}$ s.t.\ $uv \notin G(S_{i+j})$, it holds that
    \begin{equation}\label{eq:change prob}
    \begin{aligned}
    &\Prob \left[ d_{uv} (i+j) \neq d_{uv} (i+j-1) \given H_{i+j-1},S_{i+j-1}, uv \notin G(S_{i+n}) \right] \\
    & \leq \frac{\Prob \left[ d_{uv} (i+j) \neq d_{uv} (i+j-1) \given H_{i+j-1},S_{i+j-1}, uv \notin G(S_{i+j}) \right]}{\Prob \left[ uv \notin G(S_{i+n}) \given H_{i+j-1}, S_{i+j-1} uv \notin G(S_{i+j}) \right]} \leq \frac{5}{n}.
    \end{aligned}
    \end{equation}
    
    Now, for $T \in \binom{[n]}{\log n}$ let $B_T$ denote the event that for every $j \in T$, $d_{uv}(i+j) \neq d_{uv}(i+j-1)$. Applying a chain of conditional probabilities together with Inequality \eqref{eq:change prob}, for any $T \in \binom{[n]}{\log n}$:
    \[
    \Prob \left[ B_T \given S_i, H_i, uv \notin G(S_{i+n}) \right] \leq \left( \frac{5}{n} \right)^{\log n}.
    \]
    
    We will now use a union bound over all events $B_T$ to show that w.v.h.p.\ in any $n$ consecutive steps of the hypergraph process and for any uncovered edge $uv$, conditioning on the event that $uv$ remains uncovered during these steps, $d_{uv}$ changes by at most $\log n$. Indeed:
	\begin{equation}\label{eq:bounded_d}
	\begin{gathered}
	\Prob \left[ d_{uv} \left( i+n \right) \leq d_{uv} \left( i \right) - \log n \given H_i,S_i, uv \notin G \left(S_{i+n} \right) \right]
    \leq \binom{n}{\log n} \left( \frac{5}{n} \right)^{\log n} \\
    \leq \left( \frac{5en}{n\log n} \right)^{\log n} = n^{-\omegaone} .
	\end{gathered}
	\end{equation}
	
	We treat the evolution of the variables $c_v$ in a similar fashion. At time $i+1$, $c_v$ decreases iff $v \in t_{i+1} \in A_i$, in which case $c_v \left( i+1 \right) = c_v \left( i \right) - 1$. Thus:
	\begin{align}\label{eq:c change}
    \begin{split}
	& \Prob \left[ c_v \left( i+1 \right) \neq c_v \left( i \right) \given H_i, S_i \right] = \frac{1}{ \left( 1 - O \left( n^{\delta - 1} \right) \right) n^3} \frac{1}{2} \sum_{vu \notin G \left(S_i\right) } d_{vu} \left( i \right)  \leq \frac{2}{n} \\
	& \E \left[ c_v \left( i+1 \right) - c_v \left( i \right) \given H_i, S_i \right]
	= - \Prob \left[ c_v \left( i+1 \right) \neq c_v \left( i \right) \given H_i, S_i \right] .
    \end{split}
	\end{align}
	By reasoning similar to that above, for any vertex $v$ and any $0 \leq i \leq n^{2+\delta} - n$:
	\begin{equation}\label{eq:bounded_deg_step}
	\Prob \left[ c_v \left( i+n \right) \leq c_v \left( i \right) - \log n \given H_i, S_i \right] \leq n^{-\omegaone} .
	\end{equation}
	
	At this point it is convenient to rescale our variables. For $0 \leq T \leq n^{1+\delta}$, a vertex $v$, and an edge $uv$ we define:
	\begin{align*}
	\begin{split}
	& C_v \left(T\right) = c_v \left(nT\right) \\
	& D_{uv} \left(T\right) = d_{uv} \left(nT\right) .
	\end{split}
	\end{align*}
	
	\newcommand{\step}{{n^\varepsilon}}
	\newcommand{\alphak}{{\alpha \left(k\right)}}
    \newcommand{\alphat}{{\alpha \left( T \right)}}
	\newcommand{\alphao}{{ n^{ 1 + \delta - \frac{\varepsilon}{3} } = n^{\frac{253}{300}} }}
	\newcommand{\Tn}{{\left(\frac{T}{n}\right)}}
    
    Let $\varepsilon = \frac{1}{2}$. We will prove that for all $T<n^{1+\delta}$ of the form $T = k \step$ (where $k \in \left\{ 0,1,\ldots,n^{2+\delta-\varepsilon} \right\}$), every vertex $v$, and every edge $uv$:
   	\begin{align}\label{eq:trajectory}
	\begin{split}
	& C_v \left(T\right) = n y \Tn \pm \alphat \\
	& D_{uv} \left(T\right) = n z \Tn \pm \alphat .
	\end{split}
	\end{align}
    Where:
    \begin{equation*}
	\begin{split}
	& \alpha \left( 0 \right) = \alphao \\
	& \alpha \left( T + \step \right) = \alphat \left( 1 + \frac{20 \step }{n + 2T}\right) .
	\end{split}
	\end{equation*}
    
    It is straightforward to verify that for all $T$:
	\begin{equation*}
	\begin{split}
	&\alphat \leq \alpha \left( n^{1+\delta} \right) = O \left( n^{1 + 11 \delta - \frac{\varepsilon}{3}} \right) = O \left(n^{\frac{91}{100}}\right) \\
	& n y \left(\frac{T}{n}\right)
	\geq n z \left(\frac{T}{n}\right)
	\geq n z \left(\frac{n^{1+\delta}}{n}\right)
	= \Omega \left( n^{1 - \delta} \right)
	= \Omega \left( n^{\frac{99}{100}} \right) .
	\end{split}
	\end{equation*}
	And so:
	\begin{equation}\label{eq:small_error}
	\begin{split}
	\alphat
	= o \left( nz \Tn \right)
	, o \left( ny \Tn \right) .
	\end{split}
	\end{equation}
    Together, Equalities \eqref{eq:trajectory} and \eqref{eq:small_error} imply the proposition.
	
	We will prove that if \eqref{eq:trajectory} holds for $T$, then w.v.h.p.\ \eqref{eq:trajectory} also holds for $T + \step$. Since $C_v\left(0\right) = D_{uv} \left(0\right) = n$, an inductive argument completes the proof.
	
	Assume \eqref{eq:trajectory} holds for some $T$. Let $uv$ be an edge. Our first order of business is to calculate the expected change in $D_{uv}$ in a single time step. Let $T \leq i < T+\step$. If $uv$ is covered at time $i+1$ then \eqref{eq:trajectory} holds by definition. Therefore we condition on $uv \notin G \left(S_{i+1}\right)$. For compactness, we set $F_i =  \left( H_{i n},S_{i n}, uv \notin G \left( S_{ \left(i+1\right) n} \right) \right)$. Now, by definition:
	\begin{align*}
	\E \left[ D_{uv}\left( i+1 \right) - D_{uv}\left( i \right) \given F_i \right]
	= \sum_{j = 1}^{n} \E \left[ d_{uv} \left( i n + j \right) - d_{uv} \left( i n + j -1 \right) \given F_i \right].
    \end{align*}
    Equality \eqref{eq:expec} holds for any choice of $t_{in+1},\ldots,t_{in + j - 1}$. Furthermore, $uv \notin G \left( S_{ \left(i+1\right) n} \right)$ implies $uv \notin G \left( S_{in+j} \right)$. Therefore:
    \begin{align*}
    \E \left[ d_{uv} \left( i n + j \right) - d_{uv} \left( i n + j -1 \right) \given F_i \right]
    = - \Prob \left[ d_{uv} \left( i n + j \right) \neq d_{uv} \left( i n + j - 1 \right) \given F_i \right].
    \end{align*}
    
    Now, for $j \in [n]$ let $B_j$ be the event that for some edge $ab \notin G \left( S_{ \left(i+1\right) n} \right)$:
    \[
    \left| d_{ab} \left( i n + j - 1 \right) - d_{ab} \left( Tn \right) \right| \geq (i+1-T) \log n.
    \]
    By Inequality \eqref{eq:bounded_d}, $\Prob \left[ B_j \given  F_i \right] = n^{-\omegaone}$. Note that if $\overline{B_j}$ holds, then for all $ab \notin G \left( S_{ \left(i+1\right) n} \right)$, it holds that $d_{ab} \left( i n + j - 1 \right) = d_{ab} \left( Tn \right) \pm \alphat = n z\Tn \pm 2 \alphat$. Therefore, applying the law of total probability:
    \begin{align*}
    \Prob &\left[ d_{uv} \left( i n + j \right) \neq d_{uv} \left( i n + j - 1 \right) \given F_i \right] \\
    &= \Prob \left[ d_{uv} \left( i n + j \right) \neq d_{uv} \left( i n + j - 1 \right) \given F_i, \overline{B_j} \right] \pm n^{-\omegaone}\\
    &= \frac{2 \left( n z \Tn \pm 4 \alphat \right)^2}{\left( 1 - O \left( n^{\delta-1} \right) \right) n^3}
    = \left( 1 \pm O \left( n^{\delta-1} \right) \right) \frac{2 \left( z \Tn \pm 4 \alphat \right)^2}{n}.
    \end{align*}
    Therefore:
    \begin{align*}
    \E &\left[ D_{uv}\left( i+1 \right) - D_{uv}\left( i \right) \given F_i \right]
    = - \left( 1 \pm O \left( n^{\delta-1} \right) \right) 2 \left( z \Tn \pm 4 \alphat \right)^2\\
    & = - 2 z^2 \Tn \pm \frac{18 z \Tn}{n} \alphat
    = z' \Tn \pm \frac{18}{n + 2T} \alphat.
    \end{align*}

    We cannot apply the Azuma-Hoeffding inequality to $D_{uv}$ directly, as the change in a single time step might be as large as $\Omega (n)$, resulting in a meaningless bound. However, as we have already shown, this is unlikely to happen. We will therefore apply the Azuma-Hoeffding inequality to the conditional probability space in which the random variables we are tracking do not change too much in a single time step. Let $B$ be the event that for some $i$, $\left| D_{uv} \left( T \left( i+1 \right) \right) - T \left( i \right) \right| \geq \log n$. By Inequality \eqref{eq:bounded_d} $\Prob \left[ B \given F_i \right] = n^{-\omegaone}$. Therefore, applying the law of total expectation:
    \begin{equation}\label{eq:shifted_expectation}
    \begin{split}
    \E & \left[ D_{uv}\left( i+1 \right) - D_{uv}\left( i \right) \given F_i , \overline{B} \right] = \\
    & \frac{1}{\Prob \left[ \overline{B} \given  F_i \right]} \E \left[ D_{uv}\left( i+1 \right) - D_{uv}\left( i \right) \given F_i \right]
    - \frac{\Prob [B \given F_i]}{\Prob \left[ \overline{B} \given F_i \right]} \E \left[ D_{uv}\left( i+1 \right) - D_{uv}\left( i \right) \given F_i , B \right] \\
    & = z' \Tn \pm \left( \frac{18}{n + 2T} \alphat + n^{-\omegaone} \right)
    = z' \Tn \pm \frac{19}{n + 2T} \alphat .
    \end{split}
    \end{equation}
    
    We will prove the upper bound in Equation \eqref{eq:trajectory}. The proof of the lower bound is similar. To do so we transform $D_{uv}$ into a supermartingale. Define, for $T \leq i \leq T + \step$:
    \[
    D'_{uv} \left( i \right) \coloneqq D_{uv} \left( i \right) - nz \left( \frac{i}{n} \right) - \left( 1 + \frac{19 \left( i-T \right)}{n + 2T} \right) \alphat .
    \]
    Observe that, conditioning on $\overline{B}$:
    \begin{align*}
    \begin{split}
     & \left| D'_{uv} \left( i + 1 \right) - D'_{uv} \left( i \right) \right|
    \\ & \leq \left| D_{uv} \left(i+1\right) - D_{uv} \left(i\right) \right| + n \left| z \left( \frac{i+1}{n} \right) - z \left( \frac{i}{n} \right) \right| + \alphat \frac{19}{n + 2T}  = O \left( \log n \right) .
    \end{split}
    \end{align*}
    We next show that $\E \left[ D'_{uv} \left( i+1 \right) - D'_{uv} \left( i \right) \given H_{i   n}, S_{i   n} , \overline{B}  \right] \leq 0$, i.e., $D'_{uv}$ is a supermartingale. Taking Equation \eqref{eq:shifted_expectation} into account:
    \begin{align*}
    \begin{split}
    \E & \left[ D'_{uv} \left( i+1 \right) - D'_{uv} \left( i \right) \given H_{i   n}, S_{i   n} , \overline{B} \right] \\
    & = \E \left[ D_{uv} \left( i+1 \right) - D_{uv} \left( i \right) \given H_{i   n}, S_{i   n} , \overline{B} \right] - nz \left( \frac{i+1}{n} \right) + nz \left( \frac{i}{n} \right) - \frac{19}{n + 2T} \alphat \\
    & \leq z' \Tn + \frac{19}{n + 2T} \alphat - n \left( z \left( \frac{i+1}{n} \right) - z \left( \frac{i}{n} \right) \right) - \frac{19}{n + 2i} \alpha \left( i \right) \\
    & \leq z' \Tn - n \left( z \left( \frac{i+1}{n} \right) - z \left( \frac{i}{n} \right) \right) .
    \end{split}
    \end{align*}
    
	By the mean value theorem there exists some $s \in [i/n, (i+1)/n]$ s.t.\ $n \left( z \left( \frac{i+1}{n} \right) - z \left( \frac{i}{n} \right) \right) = z'(s)$. Since $z'$ is increasing it holds that $z'(s) \geq z' \left( \Tn \right)$. Therefore:
    \[
    \E \left[ D'_{uv} \left( i+1 \right) - D'_{uv} \left( i \right) \given H_{i   n}, S_{i   n} , \overline{B} \right]
    \leq z'  \Tn - z'(s) \leq 0.
    \]
	
    Finally, we apply the Azuma-Hoeffding inequality (Lemma \ref{lem:azuma}) with respect to the filtration induced by the random variables $\{H_{i   n},S_{i   n}\}_{i=T}^{T+\step}$, conditioned on $\overline{B}$.
    \[
    \Prob \left[ D'_{uv} \left( T + \step \right) \left(i\right) > \frac{\step}{n + 2T} \alphat \given \overline{B} \right] \leq \exp \left( - \Omega \left( \frac{ \left( \frac{\step}{n + 2T} \alphat \right)^2 }{\step \log^2 n} \right) \right) = n^{-\omegaone}.
    \]
    Since $\overline{B}$ holds w.v.h.p. we have, for all edges $uv$, w.v.h.p.:
    \[
    D_{uv} \left( T + \step \right) \leq n z \left( \frac{T + \step}{n} \right) + \alphat + \frac{20 \step}{n + 2T} \alphat = n z \left( \frac{T + \step}{n} \right) + \alpha \left( T + \step \right) .
    \]
	
	We analyze $C_v$ analogously, while omitting calculations very similar to those above. We focus on the most important step: calculating the expected difference. Assume Equality \eqref{eq:trajectory} holds for $T$ and let $T \leq i \leq T + \step$. Let $B_i$ be the event where for some $v$, $\left| C_v \left( i + 1 \right) - C_v \left( i \right) \right| \geq \log n$ or for some $uv \notin G \left( S_{(i+1)n} \right)$, $\left| D_{uv} \left(i+1\right) - D_{uv} \left( i \right) \right| \geq \log n$. Let $\mathcal{B}_i = \cup_{T \leq j \leq i} B_j$. By Inequality \eqref{eq:bounded_deg_step} $\Prob \left[\mathcal{B}_i \given H_{in},S_{in} \right] = n^{-\omegaone}$. If $\overline{\mathcal{B}_i}$ holds, then $C_v \left( i \right) = C_v \left( T \right) \pm \left( i-T \right) \log n$ and $D_{uv} \left( i \right) = D_{uv} \left( T \right) \pm \left( i-T \right) \log n$. For $j$ between $i   n$ and $(i+1)   n$, each $t_j$ is chosen uniformly at random from $n^3\left(1 - O\left(n^{\delta - 1}\right)\right)$ triangles. Thus, by the inductive hypothesis and Equation \eqref{eq:c change}:
	\begin{equation*}
	\begin{split}
	\E & \left[C_v \left( i + 1 \right) - C_v \left( i \right) \given H_{i   n} , S_{i n} \right]
    = \sum_{j=1}^n \E \left[ c_v \left(in + j \right) - c_v \left( in + j - 1 \right) \given H_{i   n} , S_{i n} \right] \\
	& = - n \cdot \frac{\left(n y \left(\frac{T}{n}\right) \pm 2 \alphat  \right) \left( nz \left(\frac{T}{n}\right) \pm 2 \alphat \right)}{n^3\left(1 \pm O\left(n^{ \delta - 1}\right)\right)}
	= y' \Tn \pm \frac{6 \alphat z \Tn }{n}\\
    & = y' \Tn \pm \frac{6}{n+2T} \alphat .
	\end{split}
	\end{equation*}
	As above, we can apply the Azuma-Hoeffding inequality to an appropriate shifted variable to obtain the result.
    
\end{proof}

\section{Asymptotic Enumeration of Latin Rectangles}\label{ap:rectangle count}

In this section we show that for any $\varepsilon>0$ the number of $(1-\varepsilon)n \times n$ Latin rectangles is asymptotically
\[
\left( \left( 1 + \oone \right) \left( \frac{1}{\varepsilon} \right)^{\varepsilon/(1-\varepsilon)} \frac{n}{e^2} \right)^{(1-\varepsilon)n^2}.
\]

Note that a $(1-\varepsilon)n \times n$ Latin rectangle can be viewed as a sequence of $(1-\varepsilon)n$ disjoint $n \times n $ permutation matrices. We count the number of ways to construct such a sequence matrix by matrix. Suppose we have chosen disjoint permutation matrices $P_1,\ldots,P_{i-1}$. Let $A_i$ be the $(0,1)$-matrix of available entries, i.e., $A_i(s,t) = 1$ iff for all $1\leq j < i$, $P_j(s,t)=0$. Then $A_i$ has $k(i) = n-i+1$ ones in each row and column, and $P_i$ can be any permutation matrix supported by $A_i$. By the permanent bounds of Egorychev--Falikman \cite{egorychev1981solution,falikman1981proof} and Br\'egman \cite{bregman1973some}, the permanent of an $n\times n$ $(0,1)$-matrix M with $k$ ones in each row and column satisfies:
\[
\left( \frac{k}{e} \right)^n \leq Per(M) \leq (k!)^{n/k}.
\]
Thus, the number of choices for the whole process is at least:
\begin{align*}
\prod_{k=\varepsilon n}^{n} \left( \frac{k}{e} \right)^n = \left( \frac{1}{e} \right)^{(1-\varepsilon)n^2} \left( \frac{n!}{(\varepsilon n)!} \right)^n
&\geq \left( \frac{1}{e} \right)^{(1-\varepsilon)n^2} \frac{(n/e)^{n^2}}{(\varepsilon n/e)^{\varepsilon n^2}}\\
&= \left( \left( \frac{1}{\varepsilon} \right)^{\varepsilon/(1-\varepsilon)} \frac{n}{e^2} \right)^{(1-\varepsilon)n^2}.
\end{align*}
On the other hand, the number of choices is bounded from above by
\begin{align*}
\prod_{k=\varepsilon n}^{n} (k!)^{n/k} = \prod_{k=\varepsilon n}^{n} \left( \left( 1 + \oone \right) \frac{k}{e} \right)^n
= \left( \left( 1 + \oone \right) \left( \frac{1}{\varepsilon} \right)^{\varepsilon/(1-\varepsilon)} \frac{n}{e^2} \right)^{(1-\varepsilon)n^2},
\end{align*}
as desired.

\bibliography{latin_box}
\bibliographystyle{amsplain}

\end{document}